\numberwithin{equation}{section}
\newtheorem{theorem}{Theorem}[section]
\newtheorem{corollary}[theorem]{Corollary}
\newtheorem{lemma}[theorem]{Lemma}
\newtheorem{proposition}[theorem]{Proposition}
\newtheorem{remark}[theorem]{Remark}
\title[Rigidity results for MCF graphical translators]{Rigidity results for mean curvature flow graphical translators moving in non-graphical direction}
\author{John Man Shun Ma}
\address{Department of Mathematical Sciences, University of Copenhagen, Universitetsparken 5, DK-2100 Copenhagen Ø, Denmark}
\email{jm@math.ku.dk}
\author{Yuan Shyong Ooi}
\address{Department of Mathematics, Pusan National University, Busan 46241, Korea}
\email{yuanshyong@pusan.ac.kr}
\author{Juncheol Pyo}
\address{Department of Mathematics, Pusan National University, Busan 46241, Korea}
\email{jcpyo@pusan.ac.kr}
\subjclass[2010]{53C24, 53C44}
\date{}
\begin{document}
\maketitle
\begin{abstract}
    In this paper, we study the rigidity results of complete graphical translating hypersurfaces when the translating direction is not in the graphical direction. We proved that any entire graphical translating surface in the translating direction not parallel to the graphical one is flat if either the translating surface is mean convex or the entropy of the translating surface is smaller than $2$. For higher dimensional case, we show that the same conclusion holds if the graphical translating hypersurface satisfies certain growth condition.  
\end{abstract} 

\section{Introduction}
A smooth hypersurface $\Sigma^n$ immersed in $\mathbb{R}^{n+1}$ is called a translating soliton (or translator) if its mean curvature $H$ satisfies the equation  
\begin{equation}\label{translator eqn}
H=\langle\nu,T\rangle
\end{equation}
where $\nu$ is the unit normal of $\Sigma$ and $T$ is any constant unit vector in $\mathbb{R}^{n+1}$. 

In the study of Type II singularity formation of mean curvature flow (MCF), translating soliton arises as one of the singularity model after parabolic rescaling the flow. In term of MCF, any translating soliton gives us a translating solution $\Sigma_s=\Sigma+sT$ for any $s\in\mathbb{R}$. This means under MCF, the shape of a translating soliton remains unchange but only moves in the $T$ direction. On the other hand, translating soliton can also be viewed as a minimal hypersurface in $(\mathbb{R}^{n+1},\bar{g})$ where $\bar{g}$ is a conformally flat metric \cite{ilmanen1994elliptic}. Therefore, we may expect that translator shares some analog properties as minimal hypersurface in Euclidean space.

We say that $\Sigma$ is a \emph{graphical translator} if it can be expressed as a graph of a smooth function $u:\Omega\subseteq\mathbb{R}^n\rightarrow\mathbb{R}$. In this setting, if we write $T=(T',T_{n+1})$ where $T'\in\mathbb{R}^n$ and
$\Sigma=\{(x,u(x)):x\in\Omega\subseteq\mathbb{R}^n\}$ then the function $u$ satisfies the following quasilinear elliptic PDE in divergence form
\begin{equation}
    \sum_{i=1}^nD_i\left(\frac{D_iu}{\sqrt{1+|Du|^2}}\right)=\frac{-\langle Du,T'\rangle+T_{n+1}}{\sqrt{1+|Du|^2}}
\end{equation}
Note that in the graphical case, we always choose the orientation as upward unit normal $\nu=\frac{(-Du,1)}{\sqrt{1+|Du|^2}}$. We say that $\Sigma$ is an \emph{entire graph} if its domain $\Omega=\mathbb{R}^n$.

Let us denote $\{e_i\}$ as the standard basis in $\mathbb{R}^{n+1}$. The case for $T=e_{n+1}$ is well studied in the graphical setting. We shall call this type of translator a vertical graphical translator. From the translator equation (\ref{translator eqn}), it is clear that graphical condition implies strictly mean-convexity $H=\langle \nu,e_{n+1}\rangle>0$. So graphical translator is always (strictly) mean-convex when the translating direction is upward. 

Now let us give some examples of vertical graphical translator. When $n=1$, the only complete graphical translating curve in the plane is the grim reaper $\Gamma_{gr}$ i.e. the graph of the function
\begin{equation}
    y=-\log\cos x, \quad x\in\left(\frac{-\pi}{2},\frac{\pi}{2}\right)
\end{equation}
We can also produce new translator by taking Cartesian product of a translator with euclidean factor. For example the Cartesian product of $\Gamma_{gr}$ with euclidean factor will give us a graphical translator $\Gamma_{gr}\times\mathbb{R}$ known as the grim reaper surface. By rescaling and the rotating grim reaper surface, we can produce a family of graphical translator known as the tilted grim reaper surface $\{\Sigma^\theta_{tgr}\}_{\theta\in(0,\pi/2)}$ i.e.
\begin{equation}
    \Sigma^\theta_{tgr}:=\left\{\left(x,y,-\frac{\log\cos(x\cos\theta)}{\cos^2\theta}-y\tan\theta\right):x\in\left(-\frac{\pi}{2\cos\theta},\frac{\pi}{2\cos\theta}\right),y\in\mathbb{R}\right\}
\end{equation}
Note that the family of tilted grim reaper surface is defined on a slab of width $\frac{\pi}{\cos\theta}$ which range from $\pi$ to $\infty$ as $\theta$ goes from $0$ to $\frac{\pi}{2}$.

Besides the tilted grim reaper, another example of family of graphical translator defined on a slab, known as the $\triangle$-wing, is constructed by Hoffman et al. \cite{HIMW2019}. They show that for each $b>\pi/2$, there exist a unique, complete, strictly convex graphical translator $u^b:(-b,b)\times\mathbb{R}\rightarrow\mathbb{R}$. Note that this family of translating graph is different from tilted grim reaper surface because tilted grim reaper surface is not strictly convex.

In \cite{clutterbuck2007stability}, Clutterbuck-Sch\"{u}rer-Schulze constructed the unique entire, rotationally symmetry, vertical graphical translator known as the bowl soliton (see also \cite{altschuler1994translating}). The bowl soliton is not only mean-convex but is strictly convex and is asymptotic to a paraboloid.

For higher dimension, any Cartesian product of complete minimal hypersurface with a Euclidean factor will give us a translator with translating direction parallel to the Euclidean factor direction. In particular we can take the non-planar entire graphical solution of minimal surface equation $\Sigma^8\subseteq\mathbb{R}^9$ constructed by Bombieri et al. \cite{bombieri1969minimal} and product it with $\mathbb{R}$. $\Sigma\times\mathbb{R}$ then gives us a non-planar, entire graphical translator in $\mathbb{R}^{10}$ with horizontal translating direction. Note that this example of translator is static under MCF since it is also a minimal hypersurface.

There are also various other example of translator which are not graphical. To name a few, Clutterbuck et al. \cite{clutterbuck2007stability} construct the winglike translator which can be viewed as the union of two bowl soliton with a neck; Nguyen \cite{nguyen2009translating} uses gluing construction to construct the translating tridents; Kim-Pyo \cite{kim2018existence} study helicoidal type of translating soliton and completely classified all of them; Hoffman, Martin and White \cite{hoffman2022nguyen} construct and classify semi-graph translating soliton.

We can now state the known classification result for two dimensional complete vertical graphical translator. First of all, by the work of Shahriyari \cite{shahriyari2015}, the domain of any complete, two dimensional, vertical translating graph can only be $\mathbb{R}^2$, half-plane or slab $(-b,b)\times\mathbb{R}$. Building upon the work of Shariyari, Spruck-Xiao and Wang\cite{spruckxiao2020,wang2011}, Hoffmann et.al \cite{HIMW2019} classify all the vertical translating graph as grim reaper surface, tilted grim reaper surface, bowl soliton and $\triangle$-wing surface. Among them, only bowl soliton is the entire example, the others are all defined on a slab. They also show that there does not exist any complete vertical graphical translator whose domain is a half plane.

For higher codimensional translator related result, one can refer to the work of Xin and Kunikawa \cite{Xin2015,kunikawa2015,kunikawa2017}. 

We are interested in the rigidity result of complete graphical translator which is not translating in the vertical direction. Our first observation is, if the domain $\Omega\subset\mathbb{R}^n$ is bounded and the velocity $T$ is not vertical then by applying the half space type result of Kim-Pyo \cite{kim2021half}, we know that such graphical translator cannot exist. Therefore we only need to study the unbounded domain cases. In particular we shall focus on the entire domain case. By assuming mean-convexity condition, we are able to show that entire, mean convex graphical translator not translating vertically must be a hyperplane: 

\begin{theorem}\label{mean convex}
Let $\Sigma^2=\{(x,y,u(x,y)):\forall x,y\in\mathbb{R}\}\subseteq\mathbb{R}^3$ for some $u\in C^\infty(\mathbb{R}^2)$ be an entire graphical surface satisfying the translator equation 
\[H=\langle\nu,T\rangle\]
where $T\in\mathbb{R}^3$ is a unit vector not parallel to $e_3$ and $\nu$ is upward unit normal. If $H\geq 0$ then $\Sigma$ is a plane parallel to $T$ direction.
\end{theorem}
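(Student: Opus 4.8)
The plan is to study the two support-type functions $H=\langle\nu,T\rangle$ and $v:=\langle\nu,e_3\rangle=(1+|Du|^2)^{-1/2}$ on $\Sigma$, the second of which is strictly positive precisely because $\Sigma$ is an upward graph. The first step is to record the linearized (Jacobi-type) equation that every support function satisfies: for a fixed $V\in\mathbb{R}^3$, writing $\phi_V=\langle\nu,V\rangle$ and $T^\top$ for the tangential part of $T$, the Weingarten relation $\nabla_i\phi_V=-A_{ij}\langle V,e_j\rangle$ together with the Codazzi identity and the symmetry of $A$ yields
\[
\Delta_\Sigma \phi_V+\langle T^\top,\nabla_\Sigma\phi_V\rangle+|A|^2\phi_V=0 .
\]
In particular both $H=\phi_T$ and $v=\phi_{e_3}$ solve the drift–Jacobi equation $\mathcal{L}\phi+|A|^2\phi=0$, where $\mathcal{L}:=\Delta_\Sigma+\langle T^\top,\nabla_\Sigma\,\cdot\,\rangle$.

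Next I would run a strong maximum principle dichotomy on $H$. Since $H\ge 0$, the equation gives $\mathcal{L}H=-|A|^2H\le 0$, so $H$ is a nonnegative supersolution of the locally uniformly elliptic operator $\mathcal{L}$. If $H$ vanishes at some interior point, Hopf's strong maximum principle forces $H\equiv0$ on a neighborhood; as $\Sigma$ is connected the vanishing set is open and closed, so $H\equiv 0$ on all of $\Sigma$. Hence exactly one of two cases occurs: either $H\equiv0$, or $H>0$ everywhere. In the case $H\equiv0$ the surface is minimal, and being an entire graph over $\mathbb{R}^2$ it is a plane by the Bernstein theorem; since $H=\langle\nu,T\rangle\equiv0$, the constant normal of this plane is orthogonal to $T$, i.e.\ $\Sigma$ is a plane parallel to $T$, which is exactly the desired conclusion.

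The case $H>0$ everywhere is the main obstacle, and here the hypothesis $T\not\parallel e_3$ must be used decisively: the vertical bowl soliton shows it cannot be dropped. My plan is to invoke the theorem of Spruck–Xiao that a complete two-sided translator in $\mathbb{R}^3$ with $H\ge0$ is convex; since $H>0$ everywhere, $\Sigma$ is then a complete, nonplanar, convex translator. After rotating coordinates so that $T$ becomes vertical, convexity together with completeness forces $\Sigma$ to be a graph, either entire or over a slab, of a \emph{vertical} graphical translator over the hyperplane $T^\perp$. The classification of such translators (bowl soliton, grim reaper cylinder, tilted grim reaper, and $\triangle$-wing) then applies: the grim reaper cylinder, tilted grim reaper and $\triangle$-wing are graphs only over a slab and so are never entire graphs over any plane, while the bowl soliton is an entire graph only over $T^\perp$ itself. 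Thus for $\Sigma$ to be an entire graph over $\{x_3=0\}$ one would need $e_3\parallel T$, contradicting the hypothesis. This contradiction rules out $H>0$, leaving only $H\equiv0$, and the proof is complete.

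The step I expect to require the most care is the reduction in the $H>0$ case, where one must translate the two positivity constraints into a genuine obstruction. Concretely, the Gauss image of $\Sigma$ is simultaneously confined to the open hemispheres $\{\langle\nu,e_3\rangle>0\}$ (graph condition) and $\{\langle\nu,T\rangle>0\}$ (mean convexity), hence to the lune they cut out; the heart of the matter is that a complete nonplanar convex translator cannot have its normals trapped in such a proper lune when $T\neq e_3$, since its asymptotic normal directions either fill a hemisphere (bowl) or contain an antipodal pair of horizontal directions (ruled cases). Making this quantitative—equivalently, ruling out that $v=\langle\nu,e_3\rangle$ stays positive alongside $H$—is where the convexity theorem of Spruck–Xiao, the deepest external input, is essential, and I would rely on it rather than attempt to reprove convexity by hand.
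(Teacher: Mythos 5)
Your proposal is correct and takes essentially the same route as the paper: the structural (Jacobi-type) equation plus the strong maximum principle gives the dichotomy $H\equiv 0$ or $H>0$, and in the $H>0$ case you invoke exactly the paper's two key inputs — Spruck–Xiao convexity and the Hoffman–Ilmanen–Martín–White classification of complete translating graphs over $T^\perp$ — to rule out every candidate (bowl, grim reaper, tilted grim reaper, $\triangle$-wing) as an entire graph in a direction transversal to $T$. The only cosmetic difference is the minimal case, where you use Bernstein's theorem for entire minimal graphs while the paper splits $\Sigma$ as $\Gamma\times\mathbb{R}$ along the tangential direction $T$; both are valid one-line arguments.
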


In general, dimension restriction is needed since we could have non-planar example in higher dimension. (see Section \ref{section counter example} for counter example)

On the other hand, Hershkovits \cite{hershkovits2020translators} classified translator with entropy less than or equal to the entropy of a cylinder. In particular he shows that if a translating surface $\Sigma^2$ satisfies the entropy bound
\[
\lambda(\Sigma)\leq \sqrt{\frac{2\pi}{e}}\approx 1.52
\]
then it can only be a plane with entropy 1 or the bowl soliton with entropy $\sqrt{\frac{2\pi}{e}}$. If $\Sigma$ lies in a slab and is simply connected, Chini \cite{chini2020simply} can classify those translators with entropy less than 3. He showed that vertical plane, tilted grim reaper surface, grim reaper surface or $\triangle$-wing surface are the only possible examples. In our case, we can classify all complete non-vertical graphical translator with entropy less than 2:

\begin{theorem} \label{thm rigidity in n=2 assuming lambda <2}
Let $\Omega$ be an open subset of $\mathbb R^2$ and let $\Sigma= \{ (x, y, u(x, y)) : (x, y)\in \Omega\}$ be a complete graphical translator in the direction $T$ not parallel to $e_3$. If $\lambda (\Sigma)<2$, $\Sigma$ is a hyperplane. 
\end{theorem}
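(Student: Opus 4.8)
The plan is to reduce Theorem \ref{thm rigidity in n=2 assuming lambda <2} to the two classifications already available -- Hershkovits' for small entropy and Chini's inside a slab -- and to use the non-vertical hypothesis together with the exact entropy of the grim reaper to discard every curved model. First I would record the consequences of graphicality. With the upward normal $\nu=(-Du,1)/\sqrt{1+|Du|^2}$ the function $v:=\langle\nu,e_3\rangle=1/\sqrt{1+|Du|^2}$ is strictly positive on $\Sigma$. For every fixed $V\in\mathbb R^3$ the function $\langle\nu,V\rangle$ lies in the kernel of the drift--stability operator $L=\Delta_\Sigma+\langle T^\top,\nabla_\Sigma(\cdot)\rangle+|A|^2$ of the translator (these Jacobi fields arise from the ambient translations, which preserve $H=\langle\nu,T\rangle$). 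Thus $v>0$ is a positive $L$-Jacobi field, and by the Fischer-Colbrie--Schoen criterion $\Sigma$ is a stable translator. If in addition $\lambda(\Sigma)\le\sqrt{2\pi/e}$, Hershkovits' theorem \cite{hershkovits2020translators} forces $\Sigma$ to be a plane or the bowl soliton; the bowl translates along $e_3$, so the assumption $T\not\parallel e_3$ rules it out and leaves a plane. Hence it remains to treat $\sqrt{2\pi/e}<\lambda(\Sigma)<2$.

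In this range the aim is to pin down the asymptotic geometry and the shape of the domain, recalling that by the half-space result of Kim--Pyo \cite{kim2021half} the domain of such a $\Sigma$ cannot be bounded. I would study the eternal flow $\{\Sigma+sT\}_{s\in\mathbb R}$ and its tangent flow at $-\infty$, a non-compact self-shrinker $S$ with $\lambda(S)\le\lambda(\Sigma)<2$; since the doubled plane has entropy $2$, the bound $\lambda<2$ forces $S$ to have multiplicity one, and low-entropy regularity then makes $S$ a smooth, properly embedded, non-compact shrinker in $\mathbb R^3$, hence a plane or a round cylinder $S^1\times\mathbb R$. A cylindrical model traps $\Sigma$ in a slab $\{|\langle x,\omega\rangle|<b\}$ for some horizontal unit $\omega$, and graphicality over the resulting convex domain makes $\Sigma$ simply connected, so Chini's classification \cite{chini2020simply} (valid below entropy $3$) lists the possibilities as the vertical plane, the grim reaper surface, the tilted grim reapers, and the $\triangle$-wings. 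The grim reaper surface and the $\triangle$-wings translate along $e_3$ and are discarded by $T\not\parallel e_3$; each tilted grim reaper, being a rotation and dilation of the grim reaper surface, has entropy exactly $2$ (its blow-down is a multiplicity-two line) and is excluded by $\lambda(\Sigma)<2$, leaving only the flat vertical plane. A planar tangent flow instead means $\Sigma$ is asymptotically flat and one-ended; this case I would close with a Bernstein-type argument combining the stability above with the asymptotically planar structure to conclude $\Sigma$ is itself a plane.

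I expect the heart of the argument, and its main obstacle, to be this confinement/asymptotic step rather than the final elimination of models. Two points require genuine work: first, extracting from $\lambda<2$ real control of the asymptotics -- both identifying the tangent flow at $-\infty$ and upgrading ``asymptotic to a cylinder'' into ``contained in a slab'', since a priori a cylinder-asymptotic translator need not lie in a slab; and second, the entire-domain sub-case $\Omega=\mathbb R^2$, where Chini does not apply and one instead needs the Bernstein-type statement that a non-flat, non-vertical, asymptotically planar entire graphical translator cannot occur below entropy $2$. Once the asymptotic geometry and the domain are under control, the non-vertical hypothesis and the value $\lambda=2$ of the grim reaper finish the proof.
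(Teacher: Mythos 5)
Your overall skeleton (blow-down at $-\infty$, classify the limiting shrinker, then eliminate the curved models) matches the paper's, but two of the load-bearing steps are unsupported, and one of them would fail outright. First, the classification of the blow-down: you claim that a non-compact, properly embedded self-shrinker in $\mathbb R^3$ with entropy $<2$ must be a plane or a round cylinder. No such theorem exists; the known low-entropy classification (Bernstein--Wang) only reaches the threshold $\lambda(S^1\times\mathbb R)=\sqrt{2\pi/e}\approx 1.52$, and above that threshold entropy alone does not classify shrinkers. The paper gets the plane/sphere/cylinder trichotomy not from entropy but from topology: graphicality makes $\Sigma$ simply connected, Ilmanen's local Gauss--Bonnet estimate shows the blow-down limit has genus $0$, and then Brendle's classification of genus-$0$ embedded shrinkers applies. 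You do observe simple connectedness, but you deploy it only to invoke Chini later, not where it is actually needed --- to constrain the tangent flow. (Entropy $<2$ is still used, but only to force multiplicity one, as in your doubled-plane remark.)

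Second, and more seriously, your treatment of the cylindrical case rests on a false implication. Since $\Sigma$ and all its parabolic rescalings are graphs in the $e_3$ direction, the limiting cylinder must have axis parallel to $e_3$; it cannot have a horizontal axis $\omega$. A translator whose blow-down is a \emph{vertical} cylinder need not lie in any slab --- the bowl soliton is exactly such a surface and is entire --- so the step ``cylindrical model traps $\Sigma$ in a slab,'' which you yourself flag as unproven, is not merely a gap but is unrepairable as stated, and with it the appeal to Chini (whose classification requires the slab hypothesis) collapses. The paper rules out the vertical cylinder by a completely different mechanism, and this is where the hypothesis $T\not\parallel e_3$ does its real work: one places a small sphere $S$ centered at $q_0=cT$, which lies \emph{outside} the cylinder $\{x^2+y^2=2\}$ precisely because $T$ has a nonzero horizontal component; the rescaled translator $\Sigma^{\delta_i}_{\tau_i}$ moves with velocity $\delta_i^{-2}T$ and hence passes through $q_0$ within time $\le c\delta_i^2$, which for large $i$ is less than the extinction time $r_0^2/4$ of the sphere, contradicting the avoidance principle. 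Finally, your planar case is over-engineered: once the blow-down is a multiplicity-one plane, monotonicity gives $\lambda(\Sigma)=1$ and rigidity of entropy finishes it; no stability or Bernstein-type argument (which you in any case never carry out) is needed, and the preliminary reduction via Hershkovits' theorem is likewise redundant once the full range $\lambda<2$ is handled correctly.
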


Another way to look at the rigidity problem for translator is to study their Gauss map image. In this direction, Bao-Shi \cite{bao2014gauss} studied translating soliton whose image of Gauss maps are contained in compact subsets of upper hemisphere of the standard $\mathbb{S}^n$. Their result says that such translator can only be a hyperplane. For graphical setting, image of Gauss map is closely related to the asypmtotic behaviour of the gradient of the graph. In particular, any entire graphical translator with bounded gradient must have Gauss images contained in compact subsets of upper hemisphere and hence must be a hyperplane. If we assume that $u$ satisfies certain growth rate, we can show the following rigidity result:

\begin{theorem} \label{thm main theorem}
Assume that $T$ is perpendicular to $e_{n+1}$, and let $\Sigma$ be a graphical entire translator with velocity $T$ given by $\Sigma = \{ (x, u(x)) : x\in \mathbb R^n\}$. If there are positive numbers $C_1, C_2$ so that 
\begin{equation} 
|u(q)| \le C_1  + C_2\sqrt{ |\langle q,T\rangle|}, \ \ \ \forall q\in \mathbb R^n
\end{equation}
then $\Sigma$ is the stationary horizontal plane. 
\end{theorem}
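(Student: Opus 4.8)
The plan is to exploit the drift (weighted minimal surface) structure carried by every translator. After a rotation of the horizontal $\mathbb R^n$ I may assume $T=e_1$, so that $\langle q,T\rangle=x_1$ and the translator equation becomes the weighted minimal surface equation
\[
\operatorname{div}\!\left(e^{x_1}\,\frac{Du}{\sqrt{1+|Du|^2}}\right)=0 ,
\]
i.e.\ $\Sigma$ is stationary for the weighted area $\int_\Sigma e^{\langle X,T\rangle}\,d\mu$. Set $W=\sqrt{1+|Du|^2}$, $\theta=\langle\nu,e_{n+1}\rangle=1/W>0$, $H=\langle\nu,T\rangle$, $s=\langle X,T\rangle$, and let $\mathcal L=\Delta_\Sigma+\langle T^{\top},\nabla_\Sigma\,\cdot\,\rangle=e^{-s}\operatorname{div}_\Sigma(e^{s}\nabla_\Sigma\,\cdot\,)$ be the associated drift Laplacian. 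A short computation using $\Delta_\Sigma X=H\nu$ and the Codazzi equation yields the three identities
\[
\mathcal L\langle\nu,v\rangle=-|A|^2\langle\nu,v\rangle\ \ (v\text{ constant}),\qquad \mathcal L\,s=1,\qquad \mathcal L\langle X,e_{n+1}\rangle=0 ,
\]
the last of which uses $T\perp e_{n+1}$ in an essential way. In particular the height function $h=\langle X,e_{n+1}\rangle=u$ is $\mathcal L$–harmonic on $\Sigma$, while the hypothesis reads $|h|\le C_1+C_2\sqrt{|s|}$, with $|\nabla_\Sigma s|^2=1-H^2\le1$ and $|\nabla_\Sigma h|^2=1-\theta^2$.

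First I would try to show outright that $u$ is constant. Since $\mathcal L h=0$, testing the weighted equation against $h\eta^2$ and applying Cauchy–Schwarz gives the Caccioppoli inequality
\[
\int_\Sigma \eta^2\,(1-\theta^2)\,e^{s}\,d\mu=\int_\Sigma \eta^2|\nabla_\Sigma h|^2\,e^{s}\,d\mu\le 4\int_\Sigma h^2|\nabla_\Sigma\eta|^2\,e^{s}\,d\mu ,
\]
so it suffices to produce cutoffs $\eta$ exhausting $\Sigma$ along which the right–hand side tends to $0$; then $\theta\equiv1$ and $\Sigma$ is the horizontal plane. The square–root growth $h^2\lesssim 1+|s|$ is exactly the borderline rate one hopes to absorb here. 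Because the weight $e^{s}$ is exponentially large as $s\to+\infty$ and the level sets $\{s=\mathrm{const}\}\cap\Sigma$ are non-compact, a single intrinsic cutoff will not do; I would use a product cutoff adapted to the splitting into the $T$–direction and the orthogonal directions and balance the two error terms. If this estimate is out of reach directly, the same machinery applied to the ratio $w=H/\theta=-\partial_T u$, which satisfies the divergence–form equation $\operatorname{div}_\Sigma(\theta^2 e^{s}\nabla_\Sigma w)=0$, should at least force $w\equiv0$, i.e.\ $u$ independent of the $T$–direction.

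In the latter case one finishes by descent. If $\partial_{x_1}u\equiv0$ then $u=u(x_2,\dots,x_n)$ and the translator equation degenerates to the minimal surface equation $\operatorname{div}_{x'}\big(D'u/\sqrt{1+|D'u|^2}\big)=0$ over $\mathbb R^{n-1}$; evaluating the hypothesis on the slice $\{x_1=0\}$ gives $|u|\le C_1$ everywhere, so $u$ is a bounded entire minimal graph. The Bombieri–De Giorgi–Miranda interior gradient estimate then bounds $|Du|$, Moser's theorem makes $u$ affine, and boundedness makes $u$ constant in every dimension. Equivalently, a uniform bound $\inf_\Sigma\theta>0$ confines the Gauss image to a compact subset of the open upper hemisphere, so the Bao–Shi result quoted above applies directly. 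Either way $\Sigma$ is the horizontal plane, which is a (static) translator since $\langle e_{n+1},T\rangle=0$.

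I expect the crux to be the weighted Caccioppoli step: reconciling the exponentially growing weight $e^{\langle X,T\rangle}$ with the non-compactness of $\Sigma$ transverse to $T$, and checking that the $\sqrt{\,\cdot\,}$ growth is strong enough to annihilate the cutoff errors. Heuristically this is the mechanism that must exclude height growing linearly along $T$ (the feature already visible in the linear term of the tilted grim reaper); I would expect any relaxation of the growth rate beyond $\sqrt{|\langle q,T\rangle|}$ to break the argument, so the exponent $1/2$ should be sharp.
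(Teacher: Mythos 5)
Your setup is correct, and your endgame (a uniform gradient bound confines the Gauss image to a compact subset of the open upper hemisphere, then Bao--Shi applies) is in fact the same final step as the paper's. The identities $\mathcal L\langle\nu,v\rangle=-|A|^2\langle\nu,v\rangle$, $\mathcal L s=1$, and $\mathcal L\langle X,e_{n+1}\rangle=0$ (the last indeed using $T\perp e_{n+1}$), as well as the Caccioppoli inequality itself, are all fine. But there is a genuine gap exactly where you flag it: you never produce the cutoffs, and there is a concrete obstruction to producing them. In the weighted Caccioppoli inequality the error term $\int_\Sigma h^2|\nabla_\Sigma\eta|^2e^{s}\,d\mu$ is dominated by the part of the cutoff region where $s$ is near its maximum on $\operatorname{supp}\eta$: any exhaustion by regions on which $|s|\le 2R$ carries a factor $e^{2R}$, which swamps the gain $|\nabla_\Sigma\eta|^2\sim R^{-2}$ and the growth bound $h^2\lesssim 1+R$, so the right-hand side does not tend to $0$ --- it grows exponentially. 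Cutoffs depending on $s$ alone fare no better, since the slabs $\{|s|\le 2R\}\cap\Sigma$ are non-compact transverse to $T$ and the error integral is infinite; and product cutoffs cannot remove the $e^{2R}$ factor, which sits at the outer boundary in the $+T$ direction no matter how the transverse scale is chosen. A further difficulty is that nothing in the hypotheses controls $\operatorname{Area}(\Sigma\cap B_R)$ (no entropy or gradient bound is assumed), so even the unweighted error integrals are not estimable a priori. The fallback claim that $w=H/\theta$ ``should'' be forced to vanish rests on the same unproved Liouville statement, so as written the argument does not close.

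For contrast, the paper avoids the elliptic route entirely and argues parabolically: since $\Sigma_t=\Sigma+tT$ is an eternal graphical solution of MCF with $u_t(\vec q,q_n)=u(\vec q,q_n-t)$ (after rotating so $T=e_n$), the hypothesis $|u|\le C_1+C_2\sqrt{|q_n|}$ bounds $\sup|u_t|$ on the space-time cylinder $B_{|q_n|}(q)\times[-|q_n|,|q_n|]$ by $M\le C_1+C_3\sqrt{|q_n|}$, and the Evans--Spruck interior gradient estimate for graphical MCF then gives $|Du(q)|\le C\left(1+M/q_n\right)e^{CM^2\left(q_n^{-2}+q_n^{-1}\right)}$. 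The exponent $1/2$ in the hypothesis is exactly what makes $M^2/q_n$ bounded, yielding a uniform gradient bound, after which Bao--Shi finishes the proof. If you want to salvage your approach, the missing ingredient is a Liouville theorem for drift-harmonic functions of $\sqrt{|s|}$ growth on a translator with the exponentially growing weight $e^{s}$; this is not a routine Caccioppoli argument, and the a priori gradient (equivalently, area) control it would require is precisely what the parabolic estimate supplies for free.
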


By assuming a slow growth rate of the gradient, we can also obtain the following result:
\begin{theorem}\label{slow gradient growth}
Let $\Sigma$ be an entire graphical translator given by $\Sigma = \{ (x, u(x)) : x\in \mathbb R^n\}$ with unit velocity $T$ not parallel to $e_{n+1}$ direction. If the gradient of $u$ satisfies the growth rate 
\begin{equation}
    |Du(x)|=o(|x|^{1/4})
\end{equation}
then $\Sigma$ is a hyperplane. 
\end{theorem}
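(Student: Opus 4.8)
The plan is to reduce the statement ``$\Sigma$ is a hyperplane'' to ``$A\equiv 0$'', and to extract the vanishing of the second fundamental form from the fact that a graphical translator always carries a distinguished \emph{positive} Jacobi field. Write $W=\sqrt{1+|Du|^2}$ and $b=\langle\nu,e_{n+1}\rangle=1/W$, which is strictly positive because $\nu$ is the upward normal. A direct computation --- equivalently, viewing $\Sigma$ as a minimal hypersurface in the conformally flat metric --- shows that for every constant vector $V$ the function $\langle\nu,V\rangle$ solves the Jacobi equation $\mathcal{L}\langle\nu,V\rangle+|A|^2\langle\nu,V\rangle=0$, where $\mathcal{L}=\Delta_\Sigma+\langle T,\nabla^\Sigma\,\cdot\,\rangle$ is the drift Laplacian. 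Taking $V=e_{n+1}$ produces a positive solution $b$ of $\mathcal{L}b+|A|^2b=0$. The growth hypothesis $|Du|=o(|x|^{1/4})$ is exactly the statement that $W=o(|x|^{1/4})$, i.e. that $b$ decays no faster than $|x|^{-1/4}$; this slow decay is the quantitative input I would exploit.

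Since $\mathcal{L}f=e^{-\langle p,T\rangle}\operatorname{div}_\Sigma\!\big(e^{\langle p,T\rangle}\nabla^\Sigma f\big)$, the operator $\mathcal{L}$ is self-adjoint with respect to the weighted measure $d\mu=e^{\langle p,T\rangle}\,dA(p)$. Because $b>0$ solves $\mathcal{L}b+|A|^2b=0$, the standard argument that a positive solution of a Jacobi equation forces stability (set $\phi=b\,\eta$ and integrate by parts, Fischer-Colbrie--Schoen) yields the weighted stability inequality $\int_\Sigma |A|^2\phi^2\,d\mu\le\int_\Sigma|\nabla^\Sigma\phi|^2\,d\mu$ for all $\phi\in C^\infty_c(\Sigma)$. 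I would then combine this with a Simons-type identity for translators, which takes the form $\mathcal{L}|A|^2=2|\nabla A|^2-2|A|^4$, and the refined Kato inequality, to run a Schoen--Simon--Yau type iteration: testing Simons against a power of $|A|$ times a cutoff and absorbing the quartic term via stability produces $L^p$ bounds for $|A|$ on large balls, with all remaining error terms expressed through $\int_\Sigma|\nabla^\Sigma\phi|^2\,d\mu$ and the weighted volume of $\operatorname{supp}\phi$.

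The decisive step is the choice of the cutoff $\phi$ and the verification that these error terms vanish as $\phi$ exhausts $\Sigma$. Here the growth condition enters twice: it bounds $W$, hence controls both the Euclidean area element $dA=W\,dx$ and the size of $b$, and it is calibrated so that the exponent in the cutoff closes the iteration. Concretely I would take $\phi$ of the form $\eta(|x|)$ times an appropriate power of $b$ (or of the weight), with $\eta$ a logarithmic cutoff, and show $\int_\Sigma|\nabla^\Sigma\phi|^2\,d\mu\to 0$; the rate $o(|x|^{1/4})$ is precisely what makes this limit hold. Once $\int_\Sigma|A|^2\,d\mu=0$ we conclude $A\equiv 0$, so $\Sigma$ is totally geodesic and therefore a hyperplane.

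I expect the main obstacle to be the drift, i.e. the exponential weight. Unlike the Euclidean Bernstein setting, the measure $d\mu=e^{\langle p,T\rangle}\,dA$ grows exponentially in the horizontal direction $T'$ (nonzero, since $T$ is not vertical), so a naive radial cutoff makes $\int_\Sigma|\nabla^\Sigma\phi|^2\,d\mu$ diverge. Overcoming this requires a test function adapted to the weight --- morally, pairing the cutoff with $e^{-\langle p,T\rangle/2}$ to trade the weighted inequality for an unweighted one, at the cost of a first-order drift term that must be integrated by parts using the identity $\Delta_\Sigma\langle p,T\rangle=H^2$. Balancing the terms that result is exactly where the precise exponent $1/4$ is consumed, and this balancing is the crux of the proof. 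As a consistency check, in the degenerate case where $|Du|$ is bounded the Gauss image of $\Sigma$ lies in a compact subset of the open upper hemisphere, and the conclusion already follows from the result of Bao--Shi recalled above.
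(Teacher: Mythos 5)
Your opening steps are correct and standard: for a translator, $\langle\nu,V\rangle$ does solve $\Delta_\Sigma\langle\nu,V\rangle+\langle T,\nabla^\Sigma\langle\nu,V\rangle\rangle+|A|^2\langle\nu,V\rangle=0$ for every constant vector $V$, and positivity of $b=\langle\nu,e_{n+1}\rangle$ gives, via Fischer-Colbrie--Schoen, the stability inequality weighted by $d\mu=e^{\langle p,T\rangle}dA$ (this is Shahriyari's observation that graphical translators are stable). But your proof stops exactly where you yourself locate the crux, and that crux cannot be closed along the lines you describe. Carry out your own substitution $\phi=e^{-\langle p,T\rangle/2}\psi$: with $\eta=\langle p,T\rangle$, $\nabla^\Sigma\eta=T^\top$, $\Delta_\Sigma\eta=H^2$ and $|T^\top|^2=1-H^2$, the weighted stability inequality becomes
\begin{equation*}
\int_\Sigma\Bigl(|A|^2-\tfrac14 H^2\Bigr)\psi^2\,dA\;\le\;\int_\Sigma|\nabla^\Sigma\psi|^2\,dA\;+\;\tfrac14\int_\Sigma\psi^2\,dA .
\end{equation*}
The zeroth-order term $\tfrac14\int\psi^2\,dA$ is fatal: for any cutoff equal to $1$ on $\Sigma\cap B_R$ it is at least $\tfrac14\,\mathrm{Area}(\Sigma\cap B_R)\gtrsim R^n$, so the right-hand side diverges no matter how the cutoff (logarithmic or otherwise) is chosen, and the inequality degenerates into a statement that is trivially true even for non-flat translators. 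There is also a structural reason no repair is possible within this framework: \emph{every} graphical translator is stable, and non-flat stable examples with Euclidean area growth abound (grim reaper surfaces, the bowl soliton, and for $n\ge 3$ the rotated product $R(\mathcal B\times\mathbb R^{n-2})$ of Section \ref{section counter example}, which is an entire non-vertical graph). Hence stability plus area growth can never yield flatness; the hypothesis $|Du|=o(|x|^{1/4})$ must enter through a genuine quantitative estimate, and your sketch never makes it do so --- the sentence ``the rate $o(|x|^{1/4})$ is precisely what makes this limit hold'' is an assertion of the entire theorem, not an argument. (A further problem: Schoen--Simon--Yau iterations carry dimension restrictions, whereas the theorem is claimed for all $n$.)

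The paper's proof is parabolic rather than elliptic, and that is where the exponent $1/4$ actually comes from. Since $\Sigma_t=\Sigma+tT$ is an eternal graphical solution of mean curvature flow with $u_t(x)=u(x-t\cos\theta\, e_n)+t\sin\theta$, the Ecker--Huisken interior curvature estimate gives
\begin{equation*}
\sup_{B_{R/2}(y)}|A|^2(R^2)\;\le\;\frac{C}{R^2}\sup_{B_R(y)\times[0,R^2]}|Du_t|^4 .
\end{equation*}
Choosing $y=y_0+R^2\cos\theta\, e_n$, every gradient on the right is a value of $Du$ at a point of norm $O(R^2)$, hence is $o\bigl((R^2)^{1/4}\bigr)=o(R^{1/2})$ by hypothesis, so the right side is $o(1)$; the space--time translation identity then bounds $|A|^2$ at $y_0$ at time $0$ by $o(1)$, giving $A\equiv 0$ in every dimension. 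The fourth power in the Ecker--Huisken estimate matched against the parabolic time scale $t=R^2$ is exactly what consumes the exponent $1/4$; your elliptic plan has no substitute for this parabolic gain.
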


Recently, Gama, Martin and Møller \cite{gama2022finite} studied non-vertical graphical translator in $\mathbb{R}^3$ that lies between two parallel planes in the translating direction. The conditions they impose on are the entropy and the width of the two parallel planes i.e. the distance between the two planes. They can show that if the width is finite and if the translating direction is not horizontal then the graphical translating surface can only be a plane \cite[Proposition 3.13]{gama2022finite}. We can think of finite width condition as the graph function has at most linear growth. Besides that, they also classify those simply connected translator $\Sigma^2$ with finite width and with entropy satisfying $3\leq\lambda(\Sigma)<4$. Although we impose stronger growth rate condition in Theorem \ref{thm main theorem}, we consider the rigidity result of their complement case which is when the translating direction is horizontal and our method also works for any dimension. 

The organization of this paper is as follows. We provide some background needed in the proofs of our results in Sec. \ref{Prelimenary}. In Sec. \ref{mean convex graphical}, we give a prove for Theorem \ref{mean convex}. The proofs for Theorem \ref{thm rigidity in n=2 assuming lambda <2} is given in Sec. \ref{section small entropy}. In Sec. \ref{section graphical with growth rate} we give a proof to Theorem \ref{thm main theorem} and Theorem \ref{slow gradient growth}. The final section is to provide counter a example for our results in higher dimension showing that the dimension restriction for the first two results is optimal. 

\section{Prelimenary}\label{Prelimenary}
\subsection{Colding-Minicozzi's entropy}

Let $\Sigma^{n}\subseteq\mathbb{R}^{n+1}$ be any hypersurface. Following the work of Colding-Minicozzi \cite{colding2012generic},  given $x_0\in\mathbb{R}^{n+1}$ and $t_0>0$, we define the F-functional $F_{x_0,t_0}$ (see also \cite{huisken1990asymptotic}, \cite{angenent1995computed}) by
\begin{equation}
    F_{x_0,t_0}(\Sigma):=(4\pi t_0)^{-\frac{n}{2}}\int_\Sigma e^{-\frac{|x-x_0|^2}{4t_0}}d\mu
\end{equation}
and its entropy functional $\lambda=\lambda(\Sigma)$ given by 
\begin{equation}
    \lambda=\sup_{x_0,t_0}F_{x_0,t_0}(\Sigma)
\end{equation}
In \cite{colding2012generic}, the F-functional is used by Colding-Minicozzi to study the singularities of MCF. The key property about this functional is that its critical point is precisely when $\Sigma$ is a $t=-t_0$ slice of self-shrinking solution that becomes extinct at $x_0$ and $t=0$.

The reason to work with entropy is as follow. For any properly embedded smooth hypersurface $\Sigma$ in $\mathbb{R}^{n+1}$, we have $\lambda(\Sigma)\geq 1$ and it is invariant under dilations and rigid motions of $\Sigma$. Moreover $\lambda(\Sigma)=1$ exactly when $\Sigma$ is a flat hyperplane. 

In particular, when $\Sigma_t=\Sigma+t T $ is a MCF translating solution with $\Sigma_0=\Sigma$ with finite entropy, and $\Sigma^\delta_t:=\delta\Sigma_{\delta^{-2}t}$ is any parabolic re-scaling of $\Sigma_t$, by translation and dilation invariant properties of $\lambda(\Sigma)$ all their entropy is preserved i.e. 
\begin{equation} \label{eqn entropy is preserved under parabolic rescaling}
\lambda(\Sigma^\delta_t)=\lambda(\Sigma_t)=\lambda(\Sigma)
\end{equation}

Finite entropy also gives us some control on the area of hypersurface inside an extrinsic ball in the following sense,
\begin{lemma} \label{lemma entropy bound implies area growth}
Let $\Sigma$ be any smooth hypersurface in $\mathbb{R}^{n+1}$. If $\lambda(\Sigma)$ is finite then $\Sigma$ has extrinsic Euclidean area growth. This means for any $R>1$, there exist a constant $C=C(n)>0$ such that
\[
\text{Area}(\Sigma\cap B^{n+1}_R(0))\leq C\dot\lambda(\Sigma)R^n
\]
\end{lemma}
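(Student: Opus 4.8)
The plan is to exploit the definition of the entropy as a supremum over all choices of center $x_0$ and scale $t_0$ by making a single, well-chosen evaluation of the F-functional that directly controls the area inside the ball. Since $\lambda(\Sigma) = \sup_{x_0,t_0} F_{x_0,t_0}(\Sigma)$, any particular value $F_{x_0,t_0}(\Sigma)$ is automatically bounded above by $\lambda(\Sigma)$, so the whole strategy reduces to choosing $x_0$ and $t_0$ cleverly and then bounding the Gaussian weight from below on the region of interest.

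First I would center at the origin and set the scale to match the radius: take $x_0 = 0$ and $t_0 = R^2$. With this choice the F-functional reads
\[
F_{0,R^2}(\Sigma) = (4\pi R^2)^{-n/2} \int_\Sigma e^{-|x|^2/(4R^2)}\, d\mu.
\]
Next I would discard all of $\Sigma$ outside the ball, keeping only the integral over $\Sigma \cap B^{n+1}_R(0)$; since the integrand is nonnegative this only decreases the value, giving
\[
F_{0,R^2}(\Sigma) \ge (4\pi R^2)^{-n/2} \int_{\Sigma \cap B^{n+1}_R(0)} e^{-|x|^2/(4R^2)}\, d\mu.
\]
The key elementary estimate is that on $\Sigma \cap B^{n+1}_R(0)$ we have $|x| \le R$, hence $|x|^2/(4R^2) \le 1/4$ and therefore $e^{-|x|^2/(4R^2)} \ge e^{-1/4}$. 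Pulling this constant out of the integral leaves exactly the area of the truncated surface.

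Combining these steps yields
\[
e^{-1/4}(4\pi R^2)^{-n/2}\, \mathrm{Area}\bigl(\Sigma \cap B^{n+1}_R(0)\bigr) \le F_{0,R^2}(\Sigma) \le \lambda(\Sigma),
\]
and rearranging gives $\mathrm{Area}(\Sigma \cap B^{n+1}_R(0)) \le e^{1/4}(4\pi)^{n/2}\, \lambda(\Sigma)\, R^n$, so the claimed constant is $C(n) = e^{1/4}(4\pi)^{n/2}$, depending only on $n$ as required. There is no genuine obstacle here: the argument is a one-line application of the definition of entropy together with a trivial pointwise bound on the Gaussian kernel. The only point requiring mild care is the bookkeeping of constants and the choice $t_0 = R^2$, which is dictated by dimensional analysis so that the factor $(4\pi t_0)^{-n/2}$ produces precisely the power $R^n$ after being paired with the area term; any other scaling would either waste the Gaussian decay or fail to yield the correct homogeneity.
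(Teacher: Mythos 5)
Your proposal is correct and coincides with the paper's own proof: the same choice $x_0=0$, $t_0=R^2$, the same restriction of the integral to $\Sigma\cap B_R$, the same pointwise bound $e^{-|x|^2/(4R^2)}\geq e^{-1/4}$, and the identical constant $C(n)=e^{1/4}(4\pi)^{n/2}$. Nothing further is needed.
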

\begin{proof}
From the definition of entropy, by choosing $t_0=R^2$ and $x_0=0$
\[
\lambda(\Sigma)\geq (4\pi R^2)^{-n/2}\int_{\Sigma\cap B_R}e^{-|x|^2/4R^2}\geq e^{-1/4}(4\pi)^{-n/2}R^{-n}\text{Area}(\Sigma\cap B_R)
\]
Hence
\begin{equation}
    \text{Area}(\Sigma\cap B_R)\leq e^{1/4}(4\pi)^{n/2}\lambda(\Sigma)R^n
\end{equation}
where our constant $C(n)=e^{1/4}(4\pi)^{n/2}$
\end{proof}

We also recall the definition of a blow-down of a translator with finite entropy. By Lemma \ref{lemma entropy bound implies area growth} and (\ref{eqn entropy is preserved under parabolic rescaling}), we have
\begin{equation}
    \text{Area}(\Sigma^\delta_t \cap B_R)\leq C\lambda(\Sigma)R^2
\end{equation}
for all $t<0$ and $\delta >0$. Arguing as in \cite{ilmanensing}, by the compactness theorem on Brakke flow \cite[7.1]{ilmanen1994elliptic}, there is a sequence $\delta_i \to 0$ and a limit Brakke flow $\{\nu_t\}_{t<0}$ so that $M^{\lambda_i}_t$ converges weakly to $\nu_t$ for a.e. $t$. 
Using Huisken's Monotonicity formula, we have for all $a<b<0$, 
\begin{equation*}
\int_a^b \int_{M^\lambda_t} \left| \vec H(x) + \frac{x^\perp }{-2t}\right|^2\Phi (x, t) d\mu_t^\lambda dt \to 0
\end{equation*}
as $\lambda \to 0$, where 
\begin{align*}
\Phi(x, t) = \frac{1}{(-4\pi t)^{\frac{n}{2}}} e^{-\frac{|x|^2}{-4t}}. 
\end{align*}
From there we see that the limit $\{\nu_t\}$ is self-similar: 
$$ \vec H (x) = -\frac{x^\perp}{2t}, \ \ \ \forall t <0.$$
For surfaces in $\mathbb R^3$, one obtains better regularity on $\{ \nu_t\}$: Using the local Gauss-Bonnet Estimate \cite[Theorem 3]{ilmanensing}, it is proved that \cite[Theorem 1,2]{ilmanensing} the support of $\{v_t\}$ is embedded: that is, $|v_t| = \sqrt{-t} \Sigma_\infty$ and $\Sigma_\infty$ is an embedded self-shrinker. Also, there is $\tau_i\to- 1$ so that $\Sigma^{\delta_i}_{\tau_i}$ converges locally smoothly (possibly with multiplicity) as $i\to +\infty$ to $\Sigma_\infty$ away from a discrete set in $\mathbb R^3$. Moreover, $g(\Sigma_\infty) \le g(\Sigma)$ (In \cite{ilmanensing}, the theorem is stated for blow-ups, i.e. $\delta \to \infty$, but the same argument also works for blow-down under the finite entropy assumption).

%With this area bound, we can obtain compactness result for translating surfaces with uniformly bounded entropy and genus using Brian White's compactness result (c.f \cite{white2018compactness}).
\subsection{Properties of complete graphical translator}
It is known that complete vertical graphical translator cannot have bounded domain \cite{shahriyari2015}. We show the same result holds true if the translating direction is not vertical. 
\begin{proposition}
Let $\Omega\subset\mathbb{R}^n$ be a bounded domain and 
\[
\Sigma:=\{(x,u(x)):\forall x\in\Omega\}\subseteq\mathbb{R}^{n+1}
\]
be a complete graphical translator with translating direction $T$ not parallel to $e_{n+1}$. Then such $\Sigma$ does not exist. 
\end{proposition}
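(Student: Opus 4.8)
The plan is to show that the boundedness of $\Omega$ forces $\Sigma$ into a region that no nonplanar translator can occupy, and then to rule out the planar alternative using graphicality. First I would record that, since $T$ is not parallel to $e_{n+1}$, writing $T=(T',T_{n+1})$ we have $T'\in\mathbb R^n$ nonzero. Because $\Omega$ is bounded, the orthogonal projection of $\Sigma$ onto the first $n$ coordinates lands inside $\Omega$, so every horizontal linear function $p\mapsto\langle p,(w',0)\rangle$ is bounded on $\Sigma$.

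The geometric heart of the argument is to produce a confining hyperplane that is itself a translator. Assuming $n\ge 2$, I would choose a unit vector $w'\in\mathbb R^n$ with $w'\perp T'$ and set $w=(w',0)\in\mathbb R^{n+1}$. Each affine hyperplane $P_c=\{p:\langle p,w\rangle=c\}$ is flat, so $H\equiv 0$, and its constant unit normal $w$ satisfies $\langle w,T\rangle=\langle w',T'\rangle=0$; hence $P_c$ solves the translator equation $H=\langle\nu,T\rangle$. Since $\langle x,w'\rangle$ is bounded on the bounded set $\Omega$, there are $a<b$ with $\Sigma\subset\{a\le\langle p,w\rangle\le b\}$, so $\Sigma$ lies in the half-space bounded by the translating hyperplane $P_{a-1}$ and is disjoint from it.

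At this point I would invoke the half-space theorem for translators of Kim--Pyo \cite{kim2021half}: a complete, properly embedded translator with velocity $T$ that lies in a half-space whose bounding hyperplane is itself a $T$-translator must coincide with a parallel such hyperplane. Here $\Sigma$ is complete by hypothesis and properly embedded since it is a complete graph, so the theorem applies and forces $\Sigma=P_c$ for some $c$. But $P_c$ is a vertical hyperplane: it contains the whole line $\{(x_0,t):t\in\mathbb R\}$ through any $x_0$ with $\langle x_0,w'\rangle=c$, so it meets vertical lines in more than one point and cannot be the graph of a function over $\Omega\subseteq\mathbb R^n$. This contradicts the graphical form of $\Sigma$, so no such $\Sigma$ exists.

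I expect the main obstacle to be verifying, rather than merely citing, the hypotheses of the half-space theorem --- in particular that a complete graph over a bounded domain is genuinely proper (its ends escape to infinity only through $|u|\to\infty$ or $|Du|\to\infty$ near $\partial\Omega$, so no finite boundary point is approached), and that the flat confining hyperplane is an admissible barrier for the translator maximum principle. A secondary point is the degenerate case $n=1$, where no nonzero horizontal $w'$ is perpendicular to $T'$; there the statement instead follows from the classification of complete translating curves, since the only complete graphical example is the grim reaper, whose tilted (non-vertical) rotations are graphs over unbounded intervals rather than bounded ones.
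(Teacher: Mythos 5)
Your argument has a genuine gap: the ``half-space theorem'' you invoke in the key step is false. You deliberately chose the barrier normal $w=(w',0)$ with $w'\perp T'$, so that $\langle w,T\rangle=0$ and the hyperplanes $P_c$ are themselves translators, and you then claim that a complete, properly embedded translator with velocity $T$ lying in a half-space bounded by such a translating hyperplane must be a parallel hyperplane. Grim reaper surfaces contradict this: the surface $\{(x,y,-\log\cos x): |x|<\pi/2,\ y\in\mathbb{R}\}$ is a complete, properly embedded translator with velocity $e_3$ contained in the half-space $\{x_1\le \pi/2\}$, whose boundary $\{x_1=\pi/2\}$ is a vertical plane (hence a translator with the same velocity), yet it is not a plane; rotating this example so its velocity is non-vertical shows the failure persists in the exact setting of the proposition, and tilted grim reapers and $\triangle$-wings give further counterexamples. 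This is a structural feature of translators: unlike minimal surfaces (where Hoffman--Meeks applies), they admit complete non-flat examples in slabs, so no rigidity-type half-space theorem can hold when the bounding hyperplane is parallel to $T$. What Kim--Pyo \cite[Theorem 1.1]{kim2021half} actually prove --- and what the paper uses --- is a \emph{non-existence} statement in the transverse regime: there is no complete translator with velocity $T$ contained in a half-space $\{x:\langle x,v\rangle\le c\}$ whose normal satisfies $\langle v,T\rangle>0$.

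The fix is to flip your choice of direction. Instead of $w'\perp T'$, take $v=T'/|T'|$, which exists because $T$ is not parallel to $e_{n+1}$, so $T'\ne 0$. Boundedness of $\Omega$ gives $\Sigma\subset\{x\in\mathbb{R}^{n+1}:\langle x,(v,0)\rangle\le b\}$ for some finite $b$, and now $\langle (v,0),T\rangle=|T'|>0$, so \cite[Theorem 1.1]{kim2021half} immediately yields that $\Sigma$ cannot exist; no rigidity statement, no graphicality-versus-vertical-plane argument, and no separate treatment of $n=1$ are needed. This is precisely the paper's proof (after rotating so that $T=\cos\theta\, e_1+\sin\theta\, e_{n+1}$ and taking $v=e_1$). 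Your opening move --- confining $\Sigma$ to a half-space using boundedness of $\Omega$ --- is correct; the error lies solely in which half-space you chose and in the version of the half-space theorem you attributed to Kim--Pyo.
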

\begin{proof}
By a rotation of $SO(n+1)$ fixing the $e_{n+1}$ direction, we may assume that $T=\cos\theta e_{1}+\sin\theta e_{n+1}$ for some $\theta\in(-\pi/2,\pi.2)$. We can also assume that the domain $\Omega\subseteq\{x\in\mathbb{R}^n: a\leq x_1\leq b\}$ for some finite number $a,b$. If $\Sigma$ exists, it must lie in the half space $H=\{x\in\mathbb{R}^{n+1}: \langle x,e_1\rangle\leq b\}$. Since $\langle e_1,T\rangle=\cos\theta>0$, by half-space result for translator of Kim and Pyo \cite[Theorem 1.1]{kim2021half}, we know that $\Sigma$ cannot exist. 
\end{proof}

\section{Mean convex graphical translator}\label{mean convex graphical}
In this section, we will prove the rigidity result for entire mean convex non-vertical graphical translator. We show that plane parallel to the translating direction is the only possible case under this assumption. As a corollary, we obtain the complete classification of mean-convex graphical translator in any translating direction. 
\begin{theorem}[Theorem \ref{mean convex}]
Let $\Sigma^2=\{(x,y,u(x,y)):\forall x,y\in\mathbb{R}\}\subseteq\mathbb{R}^3$ for some $u\in C^\infty(\mathbb{R}^2)$ be an entire graphical surface satisfying the translator equation $H=\langle\nu,T\rangle$ where $T\in\mathbb{R}^3$ is a unit vector not parallel to $e_3$ and $\nu$ is unit upward normal. If $H\geq 0$ then $\Sigma$ is a plane parallel to $T$ direction.
\end{theorem}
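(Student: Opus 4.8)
My plan is to treat $H=\langle\nu,T\rangle$ as a Jacobi field on the translator, combine a maximum-principle dichotomy with Bernstein's theorem, and eliminate the strictly mean convex alternative by a convexity argument. First I would normalize: since $T$ is not a multiple of $e_3$, a rotation of $SO(3)$ fixing $e_3$ lets me assume $T=a e_1+b e_3$ with $a=|T'|>0$ and $a^2+b^2=1$. Writing the upward normal as $\nu=(-Du,1)/\sqrt{1+|Du|^2}$ gives $H=\langle\nu,T\rangle=(b-a u_x)/\sqrt{1+|Du|^2}$, so the hypothesis $H\ge 0$ becomes the one-sided gradient bound $u_x\le b/a$, while graphicality gives $\langle\nu,e_3\rangle>0$ everywhere. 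The analytic input is the standard identity that on a translator every function $\langle\nu,V\rangle$ (with $V\in\mathbb R^3$ constant) satisfies $\mathcal L\langle\nu,V\rangle+|A|^2\langle\nu,V\rangle=0$, where $\mathcal L:=\Delta_\Sigma+\langle T,\nabla_\Sigma\,\cdot\,\rangle$ is the drift Laplacian; taking $V=T$ yields $\mathcal L H+|A|^2 H=0$.

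Next I would extract a dichotomy from the maximum principle. Since $H\ge 0$, the equation $\mathcal L H=-|A|^2 H$ shows that $H$ is a nonnegative supersolution of the operator $\mathcal L$, which has no zeroth order term. As $\Sigma$ is complete and without boundary, the Hopf strong minimum principle gives the clean alternative: either $H>0$ everywhere on $\Sigma$, or $H$ vanishes somewhere and is therefore identically zero.

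If $H\equiv 0$ the conclusion follows immediately: $u$ then solves the minimal surface equation on all of $\mathbb R^2$, so Bernstein's theorem forces $u$ to be affine and $\Sigma$ to be a plane. For a plane, $\nu$ is constant, and $H=0$ means $\langle\nu,T\rangle=0$, i.e. $T$ is tangent to $\Sigma$; hence $\Sigma$ is a plane parallel to $T$, as claimed.

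The main obstacle is to rule out the alternative $H>0$. Here I would invoke the theorem of Spruck--Xiao that a complete two-sided translator in $\mathbb R^3$ with $H\ge 0$ is convex, so that $u$ is a convex function. Being an entire graph over $\mathbb R^2$, $\Sigma$ is then a complete convex translator, and the classification of convex translators in $\mathbb R^3$ leaves only two entire possibilities: a plane, which is excluded since $H>0$, or (up to rigid motion) the bowl soliton. But the bowl soliton is an entire graph only when its translating direction is $e_3$, whereas our $T$ is not a multiple of $e_3$; equivalently, any tilt of the bowl develops vertical tangent planes and ceases to be an entire graph over the horizontal plane. This contradiction shows $H>0$ cannot occur, so only the case $H\equiv 0$ survives and $\Sigma$ is a plane parallel to $T$. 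The crux of the whole argument is this last step, where one must pass from mean convexity to full convexity (Spruck--Xiao) and then use that the sole entire convex translator is the vertical bowl.
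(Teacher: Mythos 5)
Your proof follows the same skeleton as the paper's: the same structural equation $\Delta_\Sigma H+\langle T,\nabla_\Sigma H\rangle+|A|^2H=0$ plus the strong maximum principle to obtain the dichotomy $H\equiv 0$ or $H>0$, and the same appeal to Spruck--Xiao convexity in the $H>0$ case. Your treatment of the minimal case via Bernstein's theorem is correct and is a perfectly good alternative to the paper's argument (the paper instead observes that $\langle\nu,T\rangle\equiv 0$ makes $T$ a parallel tangent field, so $\Sigma$ splits as a line times a geodesic); both yield a plane parallel to $T$.

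The problem is in the $H>0$ case, at exactly the step you yourself call the crux. The assertion that ``the classification of convex translators in $\mathbb{R}^3$ leaves only two entire possibilities: a plane or the bowl soliton'' is not an off-the-shelf theorem, and as stated it hides the essential difficulty. The classification of complete convex translators (assembled from Spruck--Xiao, Wang, Shahriyari and Hoffman--Ilmanen--Martin--White) also contains the grim reaper surface, the tilted grim reapers and the $\triangle$-wings, all of which are graphs over \emph{slabs} in $T^\perp$. The word ``entire'' in that classification refers to graphs in the translating direction $T$, whereas your hypothesis is that $\Sigma$ is an entire graph in the $e_3$ direction, and since $T\not\parallel e_3$ these two notions do not coincide: a surface contained in a slab whose bounding planes are not vertical still projects onto all of the $xy$-plane, so the slab-type examples are not excluded from being entire graphs over $\mathbb{R}^2\times\{0\}$ simply because they fail to be entire over $T^\perp$. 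Ruling them out is a genuine step, and it is where the paper spends most of its proof: after showing $\Sigma$ is a graph over a domain $\Omega\subseteq T^\perp$ which by Shahriyari is a plane, a half-plane, or a slab, it kills the half-plane case by nonexistence (HIMW, Thm.\ 6.7) and the slab case by the HIMW classification (Thms.\ 2.2 and 7.1) together with the fact that neither the grim reapers, the tilted grim reapers nor the $\triangle$-wings are entire graphical over the $xy$-plane. To close your gap you need this last fact; one way is to note that for each of these surfaces the closure of the Gauss image contains $\pm w$, where $w\perp T$ is the unit normal of the slab, so graphicality $\langle\nu,e_3\rangle>0$ forces $\langle w,e_3\rangle=0$, and then the slab is vertical and $\Sigma$ projects into a strip, contradicting entireness. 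Your tilted-bowl argument is correct, but it disposes of only one branch of the classification.
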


\begin{proof}
First of all, recall the structural equation for mean curvature of a translating soliton (see for example \cite[Lemma 2.1(f)]{martin2015}):
\begin{equation}
    \Delta H+\langle T,\nabla H\rangle+|A|^2H=0\
\end{equation}
Since $H\geq 0$ we have $\Delta H+\langle T,\nabla H\rangle\leq 0$. By strong maximum principle, either $H\equiv 0$ or $H>0$.

If $H\equiv 0$ then $\langle\nu,T\rangle=0$ on $\Sigma$, hence $T$ is a parallel constant tangential vector field on $\Sigma$. The integral curve of $T$ gives a foliation of $\Sigma$ hence $\Sigma$ splits as $\Gamma\times\mathbb{R}$ where the Euclidean factor is parallel to $T$ direction and $\Gamma$ is minimal geodesic in $\mathbb{R}^2$. Therefore $\Sigma$ has to be a totally geodesic plane parallel to the $T$ direction. 

Next, we consider the case when $H>0$ everywhere. We write $T^\perp$ as a plane with unit normal $T$ passing through the origin. From $H=\langle\nu, T\rangle>0$, $\Sigma$ is graphical over a domain $\Omega$ inside the plane $T^\perp$. By a $SO(n+1)$ rotation, we can assume that $T^\perp=\mathbb{R}^2\times\{0\}$. From \cite[Corollary 4.3]{shahriyari2015}, $\Omega$ can only be the entire plane $\mathbb{R}^2$, half plane or slab $\mathbb{R}\times (-b,b)$ with $b\geq\frac{\pi}{2}$. Moreover, we can also assume that $\Sigma$ is convex since a result of \cite[Theorem 1.1]{spruckxiao2020} says that complete mean convex translating surface has to be convex (but not necessarily strictly convex). We now divide into three cases and discuss each of them separately.

Case(i): $\Omega$ is $\mathbb{R}^2$, then $\Sigma$ is an entire convex graphical translator over $T^\perp$. Wang \cite{wang2011} shows that such $\Sigma$ must be a bowl soliton which is a graph over $T^\perp$. However bowl soliton cannot be entire graphical over two transversal planes at the same time. It is a contradiction.

Case(ii): $\Omega$ is half plane. According to \cite[Theorem 6.7]{HIMW2019} there exist no complete graphical translator over half plane. Hence $\Omega$ cannot be a half plane.

Case(iii): The remaining case is when $\Omega$ is a slab $\mathbb{R}\times (-b,b)$ for some $b\geq\frac{\pi}{2}$. By convexity of $\Sigma$, its Gauss curvature satisfies $K\geq 0$. Using \cite[Theorem 2.2]{HIMW2019}, if $K=0$ at some point then it vanishes everywhere and $\Sigma$ can only be grim reaper surface or tilted grim reaper surface (as a translating graph over strip in $T^\perp$ plane) both of which are not entire graphical over $xy$-plane (see also \cite[Theorem 2.7]{martin2015}). Hence we can assume $K>0$ everywhere or equivalently $\Sigma$ is strictly convex. The classification result of Hoffmann et al \cite[Theorem 7.1]{HIMW2019} on complete strictly convex translator inside a slab tells us that $\Sigma$ can only be $\triangle$-wing, grim reaper surface or tilted grim reaper surface (as a translating graph over slab in $T^\perp$ plane) in which none of them are entire graphical over $xy$-plane. 

From the three cases above, we conclude that $H>0$ cannot occur and $\Sigma$ has to be minimal. 
\end{proof}

Combining our result above and the rigidity result of Spruck-Xiao \cite{spruckxiao2020}, we obtain the following rigidity result for entire graphical translator with arbitrary translating direction:
\begin{corollary}
Let $\Sigma^2\subseteq\mathbb{R}^3$ be an entire graphical translator with any unit translating velocity $T$. If $H\geq 0$ then $\Sigma$ is either a plane parallel to the $T$ direction or a bowl soliton.
\end{corollary}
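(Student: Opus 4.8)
The plan is to reduce the statement to two regimes according to the position of the velocity $T$ relative to the graphical direction $e_3$, treating the non-vertical regime by the theorem just established and the vertical regime by the classical classification for vertical graphical translators. First I would record the elementary but crucial observation that, since $\Sigma$ is a graph over the $xy$-plane with upward normal $\nu=(-Du,1)/\sqrt{1+|Du|^2}$, the quantity $\langle\nu,e_3\rangle=(1+|Du|^2)^{-1/2}$ is strictly positive everywhere. This sign information is what distinguishes the vertical subcases.

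Next I would dichotomize on whether $T$ is parallel to $e_3$. If $T$ is not parallel to $e_3$, then $\Sigma$ is exactly an entire graphical surface satisfying the translator equation with a non-vertical velocity and with $H\geq 0$, so Theorem \ref{mean convex} applies verbatim and forces $\Sigma$ to be a plane parallel to the $T$ direction. This disposes of the non-vertical case with no further work.

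It remains to treat the vertical case $T=\pm e_3$. If $T=-e_3$, then $H=\langle\nu,-e_3\rangle=-(1+|Du|^2)^{-1/2}<0$, contradicting the hypothesis $H\geq 0$, so this subcase is vacuous. If $T=e_3$, then the translator equation gives $H=\langle\nu,e_3\rangle=(1+|Du|^2)^{-1/2}>0$ automatically, so $\Sigma$ is strictly mean convex without any appeal to the strong maximum principle. I would then invoke the rigidity result of Spruck-Xiao \cite{spruckxiao2020}, which guarantees that a complete mean convex translating surface in $\mathbb R^3$ is convex, to conclude that $\Sigma$ is a complete, entire, convex vertical graphical translator; Wang's classification \cite{wang2011} then identifies the only such surface as the bowl soliton, completing this subcase.

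The main point to get right — rather than a genuine obstacle — is the bookkeeping in the vertical regime: one must notice that graphicality in the direction $T=e_3$ produces strict mean convexity for free, so the hypothesis $H\geq 0$ imposes no additional constraint and the full strength of the Spruck-Xiao convexity theorem together with Wang's classification can be brought to bear, while the opposite orientation $T=-e_3$ is ruled out by the sign of $H$. Assembling the three subcases (non-vertical giving a plane parallel to $T$, $T=-e_3$ vacuous, and $T=e_3$ giving the bowl soliton) yields precisely the stated dichotomy.
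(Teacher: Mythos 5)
Your proof is correct and is essentially the argument the paper intends: the corollary is stated there without a written proof, as a direct combination of Theorem \ref{mean convex} (handling the non-vertical case) with the Spruck--Xiao convexity result \cite{spruckxiao2020} and Wang's classification \cite{wang2011} of entire convex vertical graphical translators (handling $T=e_3$), which is exactly your case decomposition. The only detail you make explicit beyond the paper is ruling out $T=-e_3$ by the sign of $H=\langle\nu,-e_3\rangle<0$, which the paper leaves implicit.
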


Using our result and a classification result by Martin et al., we can also obtain the following immediate consequence 
\begin{corollary}
Let $\Sigma^2\subseteq\mathbb{R}^3$ be an entire graphical translator with unit translating velocity $T$ satisfying $0\leq\langle T,e_3\rangle<1$. If $H\geq 0$ outside a compact subset of $\Sigma$ then $\Sigma$ is a plane.
\end{corollary}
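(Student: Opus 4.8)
The plan is to deduce the statement from Theorem~\ref{mean convex} once the hypothesis ``$H\ge 0$ outside a compact set'' has been upgraded to ``$H\ge 0$ on all of $\Sigma$''. Indeed, the assumption $0\le\langle T,e_3\rangle<1$ forces $T$ to be non-parallel to $e_3$, so once global mean convexity is known, Theorem~\ref{mean convex} applies directly and gives that $\Sigma$ is a plane (parallel to $T$), which is exactly the conclusion. Thus the whole content of the corollary is the globalization of mean convexity, and everything downstream is a verbatim repetition of the argument already carried out for Theorem~\ref{mean convex}: the strong maximum principle dichotomy $H\equiv 0$ or $H>0$, followed by the entire/slab classification used there, which excludes every non-planar candidate.

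For the globalization step I would start from the structural equation
\[
\Delta H+\langle T,\nabla H\rangle+|A|^2H=0,
\]
so that $H$ is a Jacobi field for the drift-stability operator $L=\Delta+\langle T^\top,\nabla\,\cdot\,\rangle+|A|^2$. The naive approach is to run a comparison argument on the bounded open set $\Omega=\{H<0\}$, which sits inside the given compact set and on whose boundary $H$ vanishes. This is exactly where the main obstacle lies: on $\Omega$ the equation only reads $\Delta H+\langle T,\nabla H\rangle=-|A|^2H\ge 0$, i.e. $H$ is an $L$-subsolution with zeroth-order coefficient $|A|^2\ge 0$ of the unfavorable sign, so $H$ behaves like an eigenfunction and an interior negative minimum is not excluded by the weak maximum principle. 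In other words, the elliptic maximum principle alone cannot prevent $H$ from dipping below zero inside the compact region, and this is the step I expect to be hard.

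To break this I would invoke the rigidity and classification results of Martin et al.\ for two-dimensional translators, applied on the ends of $\Sigma$ where mean convexity already holds, to conclude that in fact $H\ge 0$ everywhere (and hence, by the strong maximum principle applied to $\Delta H+\langle T,\nabla H\rangle=-|A|^2H\le 0$, either $H\equiv 0$ or $H>0$). The sign hypothesis $\langle T,e_3\rangle\ge 0$ is what makes this consistent: on an entire graph over the $xy$-plane the unit normal is upward and tends on average to $e_3$ along the ends, so $H=\langle\nu,T\rangle$ is asymptotically governed by $\langle T,e_3\rangle\ge 0$, matching the outside-compact mean convexity. The delicate point is pinning down the precise form of the Martin et al.\ rigidity that is being used and verifying that the mean convex ends of an entire graph with $\langle T,e_3\rangle\ge 0$ satisfy its hypotheses; granting this globalization, the corollary follows immediately from Theorem~\ref{mean convex}.
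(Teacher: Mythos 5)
Your high-level strategy (a Martin et al.\ rigidity theorem plus Theorem~\ref{mean convex}) is indeed the paper's strategy, but your proposal leaves unresolved exactly the step that constitutes the entire content of the paper's proof. The precise result needed is \cite[Theorem D]{martin2015}: a translating soliton with $H>-1$ \emph{everywhere} and $H\ge 0$ outside a compact set is either isometric to a plane or satisfies $H>0$ everywhere. Since $H=\langle\nu,T\rangle\ge -1$ automatically for a unit normal and a unit velocity, the only hypothesis to verify is $\{H=-1\}=\emptyset$; there is no need to ``globalize'' $H\ge 0$ by PDE arguments or by an analysis of the ends, and your (correct) observation that the maximum principle cannot exclude an interior negative minimum of $H$ is beside the point, because the paper never attempts that route. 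The verification is a one-line pointwise argument, and it is precisely here that the hypothesis $0\le\langle T,e_3\rangle$ enters: if $H(p)=-1$ at some $p\in\Sigma$, then equality in Cauchy--Schwarz forces $\nu(p)=-T$, whence $\langle\nu(p),e_3\rangle=-\langle T,e_3\rangle\le 0$, contradicting graphicality $\langle\nu,e_3\rangle>0$. Your heuristic that the normal ``tends on average to $e_3$ along the ends,'' so that $H$ is asymptotically governed by $\langle T,e_3\rangle$, is not the actual mechanism, would be hard to make rigorous, and in any case addresses the wrong hypothesis: mean convexity outside a compact set is already assumed, so the ends need no further treatment.

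Once \cite[Theorem D]{martin2015} applies, the dichotomy is: $\Sigma$ is a plane (done), or $H>0$ everywhere; in the latter case Theorem~\ref{mean convex} (applicable because $0\le\langle T,e_3\rangle<1$ forces $T\not\parallel e_3$) gives that $\Sigma$ is a plane parallel to $T$, hence $H\equiv 0$, a contradiction, so that branch is vacuous. This tail matches the end of your argument. The genuine gap, then, is the missing identification and verification of the hypothesis $H>-1$; without it your proof is incomplete at its crucial point, even though the repair is short.
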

\begin{proof}
We only need to show that $\{H=-1\}=\emptyset$ then \cite[Theorem D]{martin2015} implies that $\Sigma$ is either a plane or $H>0$. However the later cannot happen by Theorem \ref{mean convex}. 

Suppose there exist $p\in\Sigma$ such that $H(p)=-1$, by (\ref{translator eqn}) we have $\nu(p)=-T$ where $\nu$ is upward unit normal. Then $\langle\nu,e_3\rangle=-\langle T,e_3\rangle\leq 0$, contradict with the graphical assumption of $\Sigma$. 
\end{proof}

\begin{remark}
In \cite[Theorem D]{martin2015}, Martin et al. show that a translating soliton $\Sigma$ satisfying $H>-1$ everywhere and $H\geq 0$ outside a compact set is either isometric to a plane or $H>0$ everywhere. Without further assumption, it is not possible to exclude the case for $\langle\nu,T\rangle=H>0$ since a priori a rotation of non-planar vertical graphical translator could exists. On the other hand, with additional graphical condition $\langle\nu,e_3\rangle>0$,  we can rule out the case for $H>0$. \end{remark}

\section{Graphical translator with small entropy}\label{section small entropy}
In this section, we study the rigidity problem of non-vertical graphical translating soliton when its entropy is less than 2.
\begin{theorem}[Theorem \ref{thm rigidity in n=2 assuming lambda <2}]
Let $\Omega$ be an open subset of $\mathbb R^2$ and let 
$$\Sigma  = \{ (x, y, u(x, y)) : (x, y)\in \Omega\}$$
be a complete graphical translating soliton in the direction $T$ not parallel to $e_3$. If $\lambda (\Sigma)<2$, $\Sigma$ is a hyperplane. 
\end{theorem}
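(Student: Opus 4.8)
The plan is to study the blow-down of $\Sigma$ as a self-shrinker and to exploit that the translator equation degenerates under parabolic rescaling. After a rotation of $SO(3)$ fixing $e_3$ I would write $T = T' + T_3 e_3$ with $T' \in \mathbb R^2\times\{0\}$, $T'\neq 0$ (this uses $T\not\parallel e_3$), and record that graphicality over the $xy$-plane gives $\langle \nu, e_3\rangle = (1+|Du|^2)^{-1/2} > 0$ everywhere. Since $\Sigma$ is a graph over a planar domain it has genus $0$. Applying the blow-down construction from Section~\ref{Prelimenary}, there is a sequence $\delta_i \to 0$ and $\tau_i \to -1$ so that $\Sigma^{\delta_i}_{\tau_i}$ converges locally smoothly away from a discrete set to an embedded self-shrinker $\Sigma_\infty \subseteq \mathbb R^3$ with $g(\Sigma_\infty) \le g(\Sigma) = 0$ and $\lambda(\Sigma_\infty) \le \lambda(\Sigma) < 2$. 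Because a plane (or any shrinker) of multiplicity $m$ has entropy $\ge m$, the bound $\lambda(\Sigma_\infty) < 2$ forces $\Sigma_\infty$ to have multiplicity one.

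The key observation is that the translator equation forces $\langle \nu_{\Sigma_\infty}, T\rangle \equiv 0$. Indeed, writing $q_i = \delta_i p_i$ for the relevant points, mean curvature scales as $H_{\Sigma^{\delta_i}}(q_i) = \delta_i^{-1} H_\Sigma(p_i) = \delta_i^{-1}\langle \nu_\Sigma(p_i), T\rangle = \delta_i^{-1}\langle \nu_{\Sigma^{\delta_i}}(q_i), T\rangle$, since the unit normal is scale invariant. Smooth convergence keeps $H_{\Sigma^{\delta_i}}$ bounded, so $\langle \nu_{\Sigma^{\delta_i}}, T\rangle = \delta_i H_{\Sigma^{\delta_i}} \to 0$, giving $\langle \nu_{\Sigma_\infty}, T\rangle = 0$ on the smooth locus and hence everywhere. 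Thus $T$ is everywhere tangent to $\Sigma_\infty$, so $\Sigma_\infty$ is invariant under translation in the $T$ direction and splits as $\gamma \times \mathbb R T$, where $\gamma$ is a complete embedded self-shrinking curve in $T^\perp$. Such a curve is a line or a circle. Passing $\langle \nu_{\Sigma^{\delta_i}}, e_3\rangle > 0$ to the limit gives $\langle \nu_{\Sigma_\infty}, e_3\rangle \ge 0$; if $\gamma$ were a circle, then $\Sigma_\infty$ would be the round cylinder with axis $T$, whose normals sweep out the great circle $\{v : v\perp T\}$, which contains directions with $\langle v, e_3\rangle = -\sin\angle(T,e_3) < 0$ precisely because $T\not\parallel e_3$. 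This contradicts $\langle \nu_{\Sigma_\infty}, e_3\rangle \ge 0$, so $\gamma$ is a line and $\Sigma_\infty$ is a multiplicity-one plane $P_0$ containing $T$ with $\langle \nu_0, e_3\rangle \ge 0$.

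It remains to upgrade from a planar blow-down to flatness of $\Sigma$ itself. Here I would invoke White's local regularity theorem: a multiplicity-one planar tangent flow means the convergence $\Sigma^{\delta_i}\to P_0$ is smooth with no discrete exceptional set, the second fundamental form of $\Sigma$ decays at infinity, and the Gauss map of $\Sigma$ clusters at $\nu_0$ outside large balls. Combined with $\langle\nu,e_3\rangle>0$ on all of the complete surface $\Sigma$, this should confine the Gauss image to a compact subset of the open upper hemisphere, whereupon the theorem of Bao--Shi \cite{bao2014gauss} forces $\Sigma$ to be a plane. The main obstacle, and the step that genuinely requires all three hypotheses (graphicality over the $xy$-plane, $T\not\parallel e_3$, and the \emph{strict} bound $\lambda<2$), is excluding the borderline case $\langle \nu_0, e_3\rangle = 0$, i.e. a vertical limiting plane: there the Gauss image of $\Sigma$ can approach the equator and Bao--Shi no longer applies directly. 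The strictness $\lambda<2$ is exactly what rules out the entropy-$2$ tilted grim reaper surface (whose blow-down is a multiplicity-two vertical plane), and I expect the vertical-plane case to be eliminated by combining this entropy strictness with the constraint that $\Sigma$ is a genuine graph over the $xy$-plane, forcing the uniform lower bound $\langle\nu,e_3\rangle\ge c>0$ needed to close the argument.
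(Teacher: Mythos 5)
Your blow-down setup, the multiplicity-one reduction from $\lambda<2$, and your treatment of the cylinder case are all correct, and the cylinder case is handled by a genuinely different route from the paper's. The paper keeps Brendle's genus-zero classification \cite{Brendle}, argues that graphicality forces the axis of a limit cylinder to be parallel to $e_3$, and then kills that configuration with a quantitative avoidance-principle argument: a small sphere centered at $cT$ is disjoint from $\Sigma^{\delta_i}_{\tau_i}$, yet the rescaled translator moves with velocity $\delta_i^{-2}T$ and its flow reaches the center of the sphere in time at most $\delta_i^2 c$, before the sphere can shrink to a point. Your observation that $\langle\nu,T\rangle=\delta_i H$ on $\Sigma^{\delta_i}_{\tau_i}$, so that the blow-down satisfies $\langle\nu_{\Sigma_\infty},T\rangle\equiv 0$ and splits as $\gamma\times\mathbb{R}T$ with $\gamma$ a complete embedded self-shrinking curve (hence a line or the circle), replaces both of these steps: the axis of a limit cylinder must be parallel to $T$, which is incompatible with $\langle\nu_{\Sigma_\infty},e_3\rangle\ge 0$ as soon as $T\not\parallel e_3$. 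Given the embeddedness and smooth multiplicity-one convergence supplied by the preliminaries (Section \ref{Prelimenary}), this is a complete and arguably cleaner argument for that case.

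The genuine gap is in the plane case. Your chain (White's regularity, curvature decay, Gauss image confined to a compact subset of the open upper hemisphere, then Bao--Shi \cite{bao2014gauss}) does not close, as you yourself concede: nothing you establish excludes a vertical limit plane — the limit plane only has to contain $T$, and a vertical plane containing $T$ is consistent with everything you proved — and graphicality gives only the pointwise inequality $\langle\nu,e_3\rangle>0$, never a uniform bound $\langle\nu,e_3\rangle\ge c>0$; such a uniform bound is exactly what fails for the non-flat graphs you are trying to rule out, so "expecting" it is circular. Moreover, the strictness $\lambda<2$ has already been spent in obtaining multiplicity one; it provides no further leverage against a multiplicity-one vertical plane. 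The missing step is standard and is precisely what the paper uses: if the tangent flow at $-\infty$ is a multiplicity-one plane, then by Huisken's monotonicity formula each Gaussian integral $F_{x_0,t_0}(\Sigma)$ is bounded above by the Gaussian density of the flow at $-\infty$, which equals $F_{0,1}(\Sigma_\infty)=1$ for a multiplicity-one plane through the origin; hence $\lambda(\Sigma)=1$, and the entropy-one rigidity recalled in Section \ref{Prelimenary} forces $\Sigma$ to be a flat plane. The orientation of the limit plane is irrelevant in this argument, so the "hard borderline case" you flag is in fact the trivial case, and your final paragraph should be replaced by this monotonicity argument.
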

\begin{proof}
For all $t<0$, let $\Sigma_t = \Sigma+ tT$. For any $\delta>0$, consider the re-scaling $\Sigma^\delta_t = \delta \Sigma_{\delta^{-2} t}$. As discussed in section \ref{Prelimenary}, there is $\delta_i \to 0$, $\tau_i \to -1$ so that $\{ \Sigma^{\delta_i}_{\tau_i}\}$ converges locally smoothly to an embedded self-shrinker $\Sigma_\infty$, possibly with multiplicity. Moreover, $\lambda (\Sigma_\infty) \le \lambda (\Sigma)<2$. 

Since $\Sigma$ is graphical, it is simply connected and thus $\Sigma_\infty$ is a genus $0$ embedded self-shrinker. By the classification of Brendle \cite{Brendle}, $\Sigma_\infty$ is either the round sphere of radius $2$, the cylinder of radius $\sqrt 2$ or the plane. Since $\Sigma$ is non-compact, $\Sigma_\infty$ cannot be the round sphere. Since $\lambda (\Sigma_\infty)<2$, we conclude that $\Sigma_\infty$ is either the multiplicity one cylinder or the multiplicity one plane. In particular, the convergence $\Sigma^{\delta_i} _{\tau_i}\to \Sigma_\infty$ is locally smooth.

If $\Sigma_\infty$ is the multiplicity one plane, than $\lambda (\Sigma) = 1$ and thus $\Sigma$ is itself a hyperplane. 

Next we assume that $\Sigma_\infty$ is the multiplicity one cylinder and derive a contradiction. Since $\Sigma$ is graphical in the $e_3$ direction, $\Sigma^\delta_{t}$ is graphical in the $e_3$ direction for all $\delta >0$ and $t<0$. Since $\Sigma^{\delta_i}_{\tau_i}$ is close to $\Sigma_\infty$ in large balls, the axis of the cylinder $\Sigma_\infty$ must be parallel to $e_3$, that is 
$$\Sigma_\infty = C = \{ (x, y, z) \in \mathbb R^3 : x^2 + y^2 = 2\}.$$
Let $q_0 = cT$, where $c>0$ is fixed so that $q_0$ lies outside of $C$. Let $S$ be the sphere in $\mathbb R^3$ centered at $q_0$ with radius $r_0$. Here $r_0$ is chosen small so that $S$ is disjoint from the cylinder $C$. Let $\epsilon>0$ be the distance between $S$ and $C$. Let $K$ be a fixed large box in $\mathbb R^3$ centered at $(0,0,0)$ and contains $q_0$ and $S$. Since $\Sigma^{\delta_i}_{\tau_i}$ converges smoothly in $K$ to $C$, there is $N\in \mathbb N$ so that if $i\ge N$, $\Sigma^{\delta_i}_{\tau_i}\cap K$ can be represented as a graph of a function $u_i$ on $C\cap K$ and $|u_i|<\epsilon$. By the choice of $\epsilon$, $S$ is disjoint from $\Sigma^{\delta_i}_{\tau _i}$ for all $i\ge N$. Since $\Sigma$ is a translator moving with velocity $T$, $\Sigma^{\delta _i}_t$ is a translating solution moving with velocity $\delta_i^{-2} T$. For any $i\ge N$, there is $q_i \in \Sigma^{\delta_i} _{\tau_i}$ lying in the straight line joining $(0,0,0)$ to $q_0 = cT$. In particular, the translating soliton $\Sigma^{\delta_i}_{\tau_i+ t_i}$ contains $q_0$, where 
$$t_i = \frac{|q_0- q_i|}{|\delta_i^{-2} T|} = \delta_i^2 \frac{|q_0- q_i|}{| T|}\le \delta_i^2 c. $$
Note that the MCF starting at the sphere $S$ collapses to $q_0$ at time $t = r_0^2/4$. Hence if $\delta_i^2 c < r_0^2/4$, the MCF starting at $M^{\delta_i}_{\tau_i}$ and $S$ respectively intersect at some time $\tilde t_i \le t_i$. This contradicts to the avoidance principle for the MCF and thus $\Sigma_\infty$ cannot be the cylinder. 
\end{proof}

\section{Graphical translator with certain growth rate}\label{section graphical with growth rate}
In this section, we consider general dimensional graphical translator satisfying certain growth condition.
\begin{theorem}[Theorem \ref{thm main theorem}]
Assume that $T$ is perpendicular to $e_{n+1}$, and let $\Sigma$ be an entire graphical translator with velocity $T$ given by $\Sigma = \{ (x, u(x)) : x\in \mathbb R^n\}$. If there are positive numbers $C_1, C_2$ so that 
\begin{equation} \label{eqn assume on growth condition of u}
|u(q)| \le C_1  + C_2\sqrt{ |\langle q,T\rangle|}, \ \ \ \forall q\in \mathbb R^n
\end{equation}
then $\Sigma$ is the stationary horizontal plane. 
\end{theorem}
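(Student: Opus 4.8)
The plan is to use the fact that, when $T\perp e_{n+1}$, the vertical coordinate restricted to $\Sigma$ solves a drift-Laplace equation, and then to extract rigidity from the parabolic scaling built into the square-root growth bound.

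After a rotation of $SO(n+1)$ fixing $e_{n+1}$, I may assume $T=e_1$, so the hypothesis reads $|u(x)|\le C_1+C_2\sqrt{|x_1|}$. The key structural identity is that on any translator the restriction of a linear function $\langle X,v\rangle$ satisfies $\mathcal L\langle X,v\rangle=\langle T,v\rangle$, where $\mathcal L=\Delta_\Sigma+\langle T,\nabla_\Sigma\cdot\rangle$ is the drift Laplacian; this is immediate from $\Delta_\Sigma X=\vec H=T^\perp$. Taking $v=e_{n+1}$ and using $\langle T,e_{n+1}\rangle=0$, the height function $u=\langle X,e_{n+1}\rangle$ is $\mathcal L$-harmonic. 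Equivalently, on the base this is the weighted minimal surface equation $\mathrm{div}\!\big(e^{x_1}Du/\sqrt{1+|Du|^2}\big)=0$, so $\Sigma$ is $f$-minimal for the weight $e^{\langle X,T\rangle}$, and $\mathcal L$ is self-adjoint with respect to $e^{\langle X,T\rangle}d\mu$. Two facts worth recording are that the intrinsic gradient obeys $|\nabla_\Sigma u|^2=|Du|^2/(1+|Du|^2)<1$, and that the hypothesis forces $u$ to be uniformly bounded on every slab $\{|x_1|\le A\}$; in particular tilted planes, whose height grows linearly in directions orthogonal to $T$, are excluded from the start.

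To prove $u$ is constant I would like a Liouville theorem for $\mathcal L$-harmonic functions of controlled growth, and here the square-root bound is exactly calibrated. The cleanest way to see this is by parabolic rescaling: under $\Sigma\mapsto\Sigma^\delta=\delta\Sigma$ the graph functions transform by $u^\delta(y)=\delta u(y/\delta)$, so $|u^\delta(y)|\le\delta C_1+C_2\sqrt{\delta}\,\sqrt{|y_1|}\to 0$ locally uniformly as $\delta\to0$. Thus the blow-down of $\Sigma$ is the horizontal plane $\{x_{n+1}=0\}$, and the goal becomes to upgrade this to flatness of $\Sigma$ itself, for instance through a density/entropy rigidity argument as in the proof of Theorem \ref{thm rigidity in n=2 assuming lambda <2}, or by first establishing a global gradient bound and invoking the Gauss-map result of Bao--Shi \cite{bao2014gauss}.

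The main obstacle is precisely the non-integrability of the weight. Because $e^{\langle X,T\rangle}=e^{x_1}$ blows up as $x_1\to+\infty$, a naive weighted Caccioppoli estimate (testing $\mathcal Lu=0$ against $u\eta^2$) cannot be closed: the cutoff error $\int u^2|\nabla\eta|^2e^{x_1}$ is dominated by the region $x_1\to+\infty$ and does not vanish. The one-dimensional model $(e^{x}u')'=0$ already shows that the constraint is one-sided—non-constant solutions behave like $e^{-x_1}$ and are ruled out only by their growth as $x_1\to-\infty$—so the argument must break the symmetry and feed in the growth hypothesis in the direction where the weight decays. I therefore expect the hard part to be making the blow-down (equivalently, the one-sided Liouville statement) rigorous without an a priori entropy or area bound: either producing the needed local uniform estimates for the rescaled translators $\Sigma^\delta$, whose translating speed $\delta^{-1}$ blows up as $\delta\to0$, or controlling the wildly varying weight in the gradient estimate. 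Once $u$ is known to be constant, $\Sigma$ is the horizontal plane $\{x_{n+1}=\mathrm{const}\}$, which has $\nu=e_{n+1}\perp T$ and hence $H=\langle\nu,T\rangle=0$; being minimal, it is stationary under the flow, giving the stationary horizontal plane.
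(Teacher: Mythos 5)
Your structural setup is correct---the identity $\mathcal L\langle X,v\rangle=\langle T,v\rangle$, hence $\mathcal L u=0$ when $T\perp e_{n+1}$, the weighted divergence form of the equation, and the scaling computation are all right---and one of your two proposed endgames (a global gradient bound followed by the Gauss map theorem of Bao--Shi \cite{bao2014gauss}) is exactly how the paper concludes. But the proposal has a genuine gap, and you have in fact identified it yourself: nothing in it actually produces the gradient bound or makes the blow-down rigorous. The $C^0$ statement $u^\delta\to 0$ only gives local Hausdorff convergence of the rescaled graphs to the horizontal plane; without an a priori area or entropy bound there is no compactness theory upgrading this to multiplicity-one smooth convergence, and without smooth convergence no rigidity follows (for $n\ge 3$ one cannot fall back on the genus and simple-connectedness arguments used in Theorem \ref{thm rigidity in n=2 assuming lambda <2}). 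Your weighted Caccioppoli attempt fails for precisely the reason you state, so the proposal stops exactly where the real work begins: the one-sided Liouville statement you would need, for $\mathcal L$-harmonic functions of square-root growth on a complete translator of possibly unbounded geometry, is not obviously easier than the theorem itself.

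The missing idea, which is the heart of the paper's proof, is to exploit the fact that a translator is an \emph{eternal graphical mean curvature flow} in which time translation coincides with space translation along $T$: after rotating so that $T=e_n$, one has $u_t(\vec q,q_n)=u(\vec q,q_n-t)$. Consequently the purely spatial hypothesis $|u|\le C_1+C_2\sqrt{|q_n|}$ becomes a \emph{space-time} sup bound $M:=\sup|u_t|\le C_1+C_3\sqrt{|q_n|}$ over the parabolic cylinder $B_{|q_n|}(q)\times[-|q_n|,|q_n|]$, which is exactly the input for the interior gradient estimate for graphical mean curvature flow of Evans--Spruck \cite[Corollary 5.3]{EvansSpruck}:
\begin{equation*}
|Du(q)|\le C\left(1+\frac{M}{q_n}\right)e^{CM^2\left(q_n^{-2}+q_n^{-1}\right)}.
\end{equation*}
The square-root growth is calibrated precisely so that $M^2/q_n$ stays bounded, giving $|Du|\le C_4$ on all of $\mathbb R^n$; note that this parabolic argument is two-sided (the flow sweeps through both directions of $T$ as $t$ ranges over $[-|q_n|,|q_n|]$), which is why it succeeds where your weighted elliptic estimate, dominated by the region where $e^{\langle X,T\rangle}$ blows up, cannot close. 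With the uniform gradient bound the Gauss map lies in a compact subset of the open upper hemisphere, Bao--Shi gives that $\Sigma$ is a hyperplane, and the growth hypothesis forces that hyperplane to be horizontal, hence minimal and stationary, as in your final paragraph.
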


\begin{proof}
By a rotation of $\mathbb R^{n+1}$ fixing the $e_{n+1}$ direction, we may assume that $T = e_n$. For any $q\in \mathbb R^n$, write $q = (\vec q, q_n)$, where $\vec q = (q_1, \cdots, q_{n-1})$. Then (\ref{eqn assume on growth condition of u}) is the same as 
\begin{equation} \label{eqn assume on growth condition of u in terms of q_n}
|u(\vec q, q_n) |\le C_1 + C_2 \sqrt{|q_n|}, \ \ \ \text{for all } (\vec q, q_n)\in \mathbb R^n.
\end{equation}

For any $t\in \mathbb R$, $\Sigma_t = \{ x+tT : x\in \Sigma\}$ is also an entire graph on $\mathbb R^n$, 
$$ \Sigma_t = \{ (q, u_t(q)) : q\in \mathbb R^n\}$$
and $u_t : \mathbb R^n \to \mathbb R$ is given by 
\begin{equation}\label{equ u_t}
u_t(\vec q , q_n) = u(\vec q, q_n - t). 
\end{equation}
Since $\Sigma$ is a translator with velocity $T$, $\{\Sigma_t\}$ is a solution to the mean curvature flow and $(t, q)\mapsto u_t(q)$ satisfies the graphical mean curvature flow equation, 
\begin{equation}
    \frac{\partial u_t}{\partial t}=\sqrt{1+|Du_t|^2}D_i\left(\frac{D_iu_t}{\sqrt{1+|Du_t|^2}}\right)
\end{equation}

For any $q= (\vec q, q_n) \in \mathbb R^n$ with $q_n \neq 0$, consider the graphical mean curvature flow $u_t$ restricted to the domain $\Omega = B_{|q_n|} (q) \times [-|q_n|, |q_n|]\subseteq\mathbb{R}^n\times\mathbb{R}$. Then by (\ref{equ u_t}) and (\ref{eqn assume on growth condition of u in terms of q_n}), we have 
\begin{equation} \label{eqn bounds on sup u_t on Omega}
M:=\sup _{\Omega} |u_t(q)| \le C_1 +C_3\sqrt{|q_n|}, 
\end{equation}
where $C_3 = \sqrt{2}C_2$. Using the gradient estimates for graphical mean curvature flow \cite[Corollary 5.3]{EvansSpruck}, one has 
\begin{equation}
|Du(q)| = |Du_0(q)|\le C\left( 1+ \frac{M}{q_n}\right) e ^{CM^2 \left(q_n^{-2} + q_n^{-1}\right)},
\end{equation}
where $C$ is a dimensional constant. By (\ref{eqn bounds on sup u_t on Omega}) this implies when $q_n\neq 0$, 
\begin{equation} \label{eqn gradient bound}
|Du (q)| \le C_4
\end{equation}
for some constant $C_4$. By continuity of $Du$, (\ref{eqn gradient bound}) holds for all $q\in \mathbb R^n$. Hence the Gauss map of $\Sigma$ lies in $B^{S^n}_\lambda (y_0)$ for some $\Lambda <\pi/2$ and $y_0= (0,\cdots, 0,1)$. By \cite[Theorem 1.1]{bao2014gauss}, $\Sigma$ must be a hyperplane.

%For any $q\in \mathbb R^n$ and $R>0$, consider the graphical mean curvature flow $u_t$ defined on $\Omega_1:= B_R(q) \times [-t,0]$. Using the interior estimates \cite[Corollary 3,2(iii)]{EckerHuisken}, one has 
%$$|A(q)| \le C \left( \frac{1}{R^2} + \frac{1}{t}\right) \sup_{\Omega_1} |Du_t|.$$
%Using (\ref{eqn gradient bound}) and takes $R, t\to +\infty$, one has $|A(q)| = 0$. Since $q\in \mathbb R^n$ is arbitrary, $|A|$ is identically zero and thus $\Sigma$ is a hyperplane. From the assumption (\ref{eqn assume on growth condition of u}), $\Sigma$ is the horizontal plane $\Sigma = \{ x_{n+1} = c\}$. 
\end{proof}
\begin{remark}
For the vertical graphical translator case, by using bowl soliton as a barrier, the growth rate at infinity of $u$ can only be $|u(x)|=O(|x|^\alpha)$ where $\alpha\geq 2$ (see Remark 3.2(a) in \cite{martin2015}). The situation is quite different for non-vertical graphical translator case where we don't have bowl soliton as a barrier.  
\end{remark}
Using Ecker-Huisken interior gradient estimate for graphical solution of MCF, we can also obtain rigidity result for graphical translator with slow gradient growth.  
\begin{theorem}[Theorem \ref{slow gradient growth}]
Let $\Sigma$ be an entire graphical translator given by $\Sigma = \{ (x, u(x)) : x\in \mathbb R^n\}$ with unit velocity $T$ not parallel to $e_{n+1}$ direction. If the gradient of $u$ satisfies the growth rate 
\begin{equation}
    |Du(x)|=o(|x|^{1/4})
\end{equation}
then $\Sigma$ is a hyperplane. 
\end{theorem}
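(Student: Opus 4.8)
The plan is to reduce this statement to the gradient-estimate machinery already used in the proof of Theorem~\ref{thm main theorem}, but now using the Ecker–Huisken interior gradient estimate instead of the Evans–Spruck estimate, because here we have information on $Du$ along a ray rather than on $u$ itself. The strategy is to show that the hypothesis $|Du(x)| = o(|x|^{1/4})$ forces $|Du|$ to be \emph{globally bounded}, after which the conclusion follows immediately: a global gradient bound confines the Gauss map image of $\Sigma$ to a compact subset of the open upper hemisphere $B^{S^n}_\Lambda(y_0)$ with $\Lambda < \pi/2$, and then \cite[Theorem 1.1]{bao2014gauss} gives that $\Sigma$ is a hyperplane. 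So the real content is upgrading the sublinear-power growth of $Du$ to a uniform bound.

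First I would set up the flow picture exactly as before. By a rotation of $\mathbb R^{n+1}$ fixing $e_{n+1}$ we may take $T=e_n$ (note $T$ is not parallel to $e_{n+1}$, so after rotation it lies in the horizontal $\mathbb R^n$, and the translating flow $\Sigma_t = \Sigma + tT$ is an entire graphical MCF solution $u_t(\vec q, q_n) = u(\vec q, q_n - t)$ governed by the graphical MCF equation). The Ecker–Huisken interior estimate says, roughly, that for a graphical solution of MCF on a parabolic ball of radius $\rho$ centered at $(x_0, t_0)$, the gradient at the center is controlled by the oscillation of the height function on that parabolic ball divided by $\rho$, via an estimate of the form
\begin{equation*}
|Du(x_0,t_0)| \le c_1 \exp\!\left( c_2 \frac{\sup |u - u(x_0,t_0)|^2}{\rho^2}\right),
\end{equation*}
where the sup is taken over the parabolic cylinder of radius $\rho$. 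The point is that for a translator this is essentially a statement about one time slice, and the oscillation of $u$ over a ball of radius $\rho$ centered at $x$ can be estimated by integrating the gradient bound $|Du| = o(|x|^{1/4})$: over $B_\rho(x)$ the height varies by at most $\sim \rho \cdot \sup_{B_\rho(x)} |Du|$, which grows like $o(\rho \cdot |x|^{1/4})$.

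The key step, and the main obstacle, is choosing the radius $\rho = \rho(x)$ of the parabolic ball as a function of $|x|$ so that the ratio $(\operatorname{osc} u)^2/\rho^2$ stays bounded as $|x|\to\infty$. If over $B_\rho(x)$ we have $|Du| = o(|x|^{1/4})$ then $\operatorname{osc}_{B_\rho(x)} u = o(\rho\,|x|^{1/4})$, so $(\operatorname{osc} u)^2/\rho^2 = o(|x|^{1/2})$, which is \emph{not} yet bounded for fixed $\rho$. The resolution is to let $\rho$ grow with $|x|$: choosing $\rho \sim |x|^{1/2}$ (so the parabolic ball is large but still a definite fraction away from the origin, which one arranges by taking $|x|$ large and $\rho = \tfrac12 |x|$, say), the oscillation over $B_\rho(x)$ becomes $o(\rho \cdot (\rho)^{1/4}) = o(\rho^{5/4})$ and hence $(\operatorname{osc} u)^2/\rho^2 = o(\rho^{1/2})$, which still diverges. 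This indicates that the exponent $1/4$ must be exploited more carefully: the correct pairing is to keep $\rho$ comparable to $|x|$ and observe that the relevant quantity in the Ecker–Huisken estimate is the \emph{dimensionless} ratio of oscillation to radius, where the oscillation is bounded using that the gradient itself is $o(|x|^{1/4}) = o(\rho^{1/4})$, giving $\operatorname{osc}/\rho = o(\rho^{1/4})\to$ something one must balance against the exponential. The genuinely delicate point is that a naive application does not close, so one must instead argue by contradiction: if $|Du|$ were unbounded, pick a sequence $x_k$ with $|Du(x_k)| \to \infty$, rescale $\Sigma$ by $\lambda_k = |Du(x_k)|^{-?}$ chosen so that the rescaled graphs have bounded gradient at the marked point, extract a smooth limit which (because $|Du| = o(|x|^{1/4})$ kills the translating term and the gradient after rescaling) is an entire minimal graph with a critical-gradient point, and derive a contradiction with the Bao–Shi/Gauss-map rigidity or with Bernstein-type flatness of the blow-down. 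I expect the contradiction/blow-up argument to be the technical heart; the exponent $1/4$ is presumably exactly what is needed to make the rescaled translating term $\lambda_k^{-1}$ (coming from $H = \langle \nu, T\rangle$) vanish in the limit while preserving a nontrivial gradient, so that the limit is a nonflat entire minimal graph, contradicting the confinement of its Gauss image to a hemisphere. Once the global bound $|Du| \le C$ is established, the final line is identical to the previous theorem: the Gauss map lands in $B^{S^n}_\Lambda(y_0)$, $\Lambda < \pi/2$, and \cite[Theorem 1.1]{bao2014gauss} forces $\Sigma$ to be a hyperplane.
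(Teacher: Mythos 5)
Your proposal has a genuine gap: it never actually proves the global gradient bound on which everything else rests. You candidly note that the Ecker--Huisken \emph{gradient} estimate route does not close (the ratio $(\operatorname{osc} u)^2/\rho^2$ comes out as $o(|x|^{1/2})$, which diverges for every admissible choice of $\rho$), and the fallback blow-up argument you sketch cannot work as stated: for a graph, a dilation $\tilde u(x) = \lambda u(x/\lambda)$ satisfies $D\tilde u(x) = Du(x/\lambda)$, so rescaling \emph{preserves} the gradient, and no choice of scaling factor $\lambda_k = |Du(x_k)|^{-?}$ can normalize an exploding gradient at the marked points. Moreover, extracting a smooth limit of the rescaled graphs would itself require curvature or gradient bounds, which is exactly what is missing. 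So the ``technical heart'' you defer to is not merely unfinished; it is built on a renormalization that has no effect on the quantity you are trying to control.

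The paper's proof avoids the gradient bound (and Bao--Shi) altogether. It applies the Ecker--Huisken interior \emph{curvature} estimate \cite[Corollary 3.2(ii)]{EckerHuisken} to the translating flow $\Sigma_t = \Sigma + tT$: for $t = R^2$,
\begin{equation*}
\sup_{B_{R/2}(y)}|A|^2(R^2)\leq \frac{C}{R^2}\sup_{B_R(y)\times[0,R^2]}|Du_t|^4 ,
\end{equation*}
together with the translation identity $\sup_{B_{R}(y)}|A|^2(t)=\sup_{B_{R}(y-t\cos\theta e_n)}|A|^2(0)$. Given any point $y_0$ in the domain of the original graph, one takes $y = y_0 + R^2\cos\theta\, e_n$; every point entering the right-hand side then lies at distance $O(R^2)$ from the origin, so the hypothesis $|Du| = o(|x|^{1/4})$ gives $|Du_t| = o(R^{1/2})$ on the relevant cylinder, hence $|Du_t|^4/R^2 = o(1)$, and letting $R\to\infty$ forces $|A|(y_0)=0$; thus $\Sigma$ is totally geodesic, i.e.\ a hyperplane. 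This is exactly why the exponent $1/4$ appears: the estimate pairs the fourth power of the gradient against one factor of time $t=R^2$, while the translation sweeps the relevant points a distance $\sim R^2$. Your instinct that the exponent had to be ``exploited more carefully'' was correct, but the right pairing is $|Du|^4$ against $t$ in the curvature estimate, not oscillation against radius in the gradient estimate.
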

\begin{proof}
We can assume that $T=\cos\theta e_n+\sin\theta e_{n+1}$, for some $\theta\in(\pi/2,\pi/2)$, after a rotation of $\mathbb{R}^{n+1}$ fixing the $e_{n+1}$ direction. Similar as before, for any $t\in\mathbb{R}$,  $\Sigma_t:=\Sigma+tT=\{(x,u_t(x):x\in\mathbb{R}^n\}$ is an entire graphical solution of MCF where $u_t:\mathbb{R}^n\rightarrow\mathbb{R}$ is given by
\begin{equation}
    u_t(x_1,\cdots,x_n)=u(x_1,\cdots,x_n-t\cos\theta)+t\sin\theta
\end{equation}
and the gradient of $u_t$ is 
\begin{equation}
    Du_t(x)=Du(x-te_{n})
\end{equation}
Using Ecker-Huisken interior estimate \cite[Corollary 3.2(ii)]{EckerHuisken} for any $R>0$ and $t=R^2$, we obtain an estimate for the curvature of $\Sigma_t$ in term of $Du_t$ i.e. for any $y\in\mathbb{R}^n$
\begin{equation}\label{ecker huisken estimate}
    \sup_{B_{R/2}(y)}|A|^2(R^2)\leq C\frac{1}{R^2}\sup_{B_R(y)\times [0,R^2]} |Du_t|^4
\end{equation}
From growth rate assumption and $t\in[0,R^2]$, we have
\begin{equation}
|Du_t(x)|=o(|x-te_n|^{1/4})=o(R^{1/2})
\end{equation} 
Also since $\Sigma_t$ is a graphical translating solution, we know that for any $t$
\begin{equation}\label{shifting}
    \sup_{B_{R}(y)}|A|^2(t)=\sup_{B_{R}(y-t\cos\theta e_n)}|A|^2(0)
\end{equation}
Letting $R\rightarrow\infty$ will imply that $\Sigma$ is totally geodesic. We can argue the previous statement by contradiction. Suppose $\Sigma$ is not totally geodesic then there exist $y_0\in\pi(\Sigma)$, the projection of $\Sigma$ to its domain, such that $|A|(y_0)\neq 0$. Now we choose $R$ large enough such that RHS of (\ref{ecker huisken estimate}) is less than $|A|(y_0)/2$ and pick $y=y_0+R^2\cos\theta e_n$. Our choice of $R$, $y$ and also (\ref{shifting}) give us a contradiction to (\ref{ecker huisken estimate}). 
\end{proof}

\section{Counter example in higher dimension}\label{section counter example}
In this section, we give an example to illustrate why the growth condition in Theorem \ref{thm main theorem} is necessary and that Theorem \ref{thm rigidity in n=2 assuming lambda <2} and Theorem \ref{mean convex} does not hold for $n\ge 3$. Let $\mathcal B$ be the bowl soliton in $\mathbb R^3$ given as a graph $\mathcal B= \{ (x, y, b(x, y)) : (x, y)\in \mathbb R^2\}$. Then for each $n\ge 3$, consider the translator $\mathcal B \times \mathbb R^{n-2}$. That is, 
\begin{equation}
\mathcal B\times \mathbb R^{n-2} = \{ ( x, y, z_1, \cdots, z_{n-2}, b(x, y) ) : (x, y, z_1, \cdots, z_{n-2})\in \mathbb R^n\}. 
\end{equation}
Let $R \in SO(n+1)$ be the rotation 
$$R(x, y, z_1, \cdots, z_{n-2}, w) = \left(x, y, z_1, \cdots, z_{n-3}, \frac{1}{\sqrt{2}} (z_{n-2} - w), \frac{1}{\sqrt{2}} (z_{n-2}+w)\right)$$
and let $M = R (\mathcal B\times \mathbb R^{n-2})$. 

Note that $M$ is an entire graph over $\mathbb R^n \times \{0\}$. Indeed, 
\begin{equation}
M = \{ (x, y, z_1, \cdots, z_{n-3}, \tilde z, \tilde z + \sqrt 2 b(x, y)) : (x, y, z_1, \cdots, z_{n-3}, \tilde z) \in \mathbb R^n\}. 
\end{equation}
$M$ is a translator in the direction $T=Re_{n+1} = \frac{1}{\sqrt 2} (-e_n + e_{n+1})$. Since $$\lambda (M) = \lambda (\mathcal B\times \mathbb R^{n-2}) = \lambda (\mathcal B) = \lambda (\mathbb S^1) <2$$
this implies that Theorem \ref{thm rigidity in n=2 assuming lambda <2} does not hold for $n\ge 3$. Since $M$ is convex and thus mean convex, this also implies that the Theorem \ref{mean convex} cannot be generalized to $n\ge 3$. Lastly, since 
$$T_1:= -\sqrt 2e_n = T - \frac{1}{\sqrt 2}(e_n+ e_{n+1}) $$
and $e_n + e_{n+1}$ is tangential to $M$, so 
\[
T_1^\perp=T^\perp
\]
If we consider $\bar{M}=\sqrt{2}M$, since $M$ is a graphical translator moving in $T$ direction then $\bar{M}$ will be a graphical translator moving in $-e_n$ direction.
This also implies that Theorem \ref{thm main theorem} does not hold without extra growth assumption when $n\ge 3$. 

\section*{Acknowledgement}
The first author is supported in part by DFF Sapere Aude 7027-00110B, by CPH-GEOTOP-DNRF151 and by CF21-0680 from respectively the Independent Research Fund Denmark, the Danish National Research Foundation and the Carlsberg Foundation.The second and third author are supported in part by the National Research Foundation of Korea (NRF-2021R1A4A1032418). 

\bibliographystyle{alpha}
\bibliography{reference}
\end{document}